\documentclass[12pt]{scrartcl}

\usepackage[utf8]{luainputenc}
\usepackage[USenglish]{babel}
\usepackage{csquotes}

\usepackage[a4paper,top=27mm,bottom=20mm,inner=25mm,outer=20mm]{geometry}

% bibliography
\usepackage[%
  backend=bibtex,bibencoding=ascii,
%   backend=biber,
%   style=authoryear-comp, dashed=false,
  style=numeric-comp,
%   firstinits=true, uniquename=init, %abbreviate first names
  giveninits=true, uniquename=init, %abbreviate first names
  natbib=true,
  url=true,
  doi=true,
  isbn=false,
  backref=false,
  maxnames=99,
  ]{biblatex}
\addbibresource{references.bib}

% math packages
\usepackage{amsmath}
\allowdisplaybreaks
\numberwithin{equation}{section}
\usepackage{amssymb}
\usepackage{commath}
\usepackage{mathtools}
\usepackage{bbm}
\usepackage{nicefrac}
\usepackage{subdepth}

\usepackage{siunitx}

\usepackage{amsthm}
% \theoremstyle{plain}
%   \newtheorem{theorem}{Theorem}
%   \newtheorem{lemma}[theorem]{Lemma}
%   \newtheorem{proposition}[theorem]{Proposition}
%   \newtheorem{conjecture}[theorem]{Conjecture}
%   \newtheorem{corollary}[theorem]{Corollary}
% \theoremstyle{definition}
%   \newtheorem{definition}[theorem]{Definition}
%   \newtheorem{remark}[theorem]{Remark}
%   \newtheorem{example}[theorem]{Example}
%   \newtheorem{procedure}[theorem]{Procedure}
%   \newtheorem{assumption}[theorem]{Assumption}
% \numberwithin{theorem}{section}
\usepackage{thmtools}
% https://tex.stackexchange.com/questions/254398/in-theorem-style-how-to-make-the-dot-font-depend-on-the-name-font
\usepackage{etoolbox}
\makeatletter
% don't typeset the note in a group, so the punctuation inherits
% the font specification of the note
\patchcmd{\thmt@setheadstyle}
 {\bgroup\thmt@space}
 {\thmt@space}
 {}{}
\patchcmd{\thmt@setheadstyle}
 {\egroup\fi}
 {\fi}
 {}{}
\makeatother
\declaretheoremstyle[
  bodyfont=\normalfont\itshape,
  headformat=\NAME\ \NUMBER\NOTE,
]{myplain}
\declaretheoremstyle[
  headformat=\NAME\ \NUMBER\NOTE,
]{mydefinition}
\newcommand{\envqed}{{\lower-0.3ex\hbox{$\triangleleft$}}}
\declaretheorem[style=myplain,numberwithin=section]{theorem}

\declaretheorem[style=mydefinition,numberlike=theorem,qed=\envqed]{definition}

% load hyperref after amsmath to get rid of stupid ``destination with the same identifier...'' warnings
\usepackage[plainpages=false,pdfpagelabels,hidelinks,unicode]{hyperref}

% figures
\usepackage{color}
\usepackage{graphicx}
\usepackage[small]{caption}
\usepackage{subcaption}

% suppress "multiple pdfs with page group included in a single page"
% http://tex.stackexchange.com/questions/198586/conditional-based-on-the-version-of-pdflatex
\begingroup\expandafter\expandafter\expandafter\endgroup
\expandafter\ifx\csname pdfsuppresswarningpagegroup\endcsname\relax
\else
  \pdfsuppresswarningpagegroup=1\relax
\fi

% tables
\usepackage{booktabs}
\usepackage{rotating}
\usepackage{multirow}

\usepackage{enumitem}

% fonts
\usepackage{ifluatex}
\ifluatex
  \usepackage[no-math]{fontspec}
\else
  \usepackage[T1]{fontenc}
\fi
\usepackage{newpxtext,newpxmath}

% definitions
\usepackage{xparse}

\let\epsilon\varepsilon
\let\phi\varphi
\let\rho\varrho

\usepackage{xspace}

\newcommand{\eg}[0]{{e.g.\@}\xspace}
\newcommand{\ie}[0]{{i.e.\@}\xspace}

\renewcommand{\O}{\mathcal{O}}

\newcommand{\R}{\mathbb{R}}
% does nothing, if already defined

\renewcommand{\H}{\mathcal{H}}
\newcommand{\inv}{\mathcal{I}}
\renewcommand{\S}{S}
\newcommand{\xint}{\int_{\xmin}^{\xmax}}

\newcommand{\I}{\operatorname{I}}

% see http://tex.stackexchange.com/questions/253077/how-do-you-create-a-set-in-latex
\DeclarePairedDelimiterX\newset[1]\lbrace\rbrace{\setaux #1||\endsetaux}
\def\setaux#1|#2|#3\endsetaux{\if\relax\detokenize{#2}\relax #1 \else #1 \;\delimsize\vert\; #2 \fi}
\renewcommand{\set}[1]{\newset*{#1}}

\newcommand{\dt}{\Delta t}

\renewcommand{\vec}[1]{\pmb{#1}}
\newcommand{\mat}[1]{\vec{#1}}
\NewDocumentCommand{\D}{m+g}{%
  \IfNoValueTF{#2}
    {D_{#1}}
    {D_{#1,#2}}%
}

\NewDocumentCommand{\A}{m+g}{%
  \IfNoValueTF{#2}
    {A_{#1}}
    {A_{#1,#2}}%
}
\NewDocumentCommand{\M}{g}{%
  \IfNoValueTF{#1}
    {M}
    {M_{#1}}%
}
\NewDocumentCommand{\proj}{g}{%
  \IfNoValueTF{#1}
    {P}
    {P_{#1}}%
}
\NewDocumentCommand{\eL}{g}{%
  \IfNoValueTF{#1}
    {\vec{e}_{L}}
    {\vec{e}_{L,#1}}%
}
\NewDocumentCommand{\eR}{g}{%
  \IfNoValueTF{#1}
    {\vec{e}_{R}}
    {\vec{e}_{R,#1}}%
}
\NewDocumentCommand{\dL}{g}{%
  \IfNoValueTF{#1}
    {\vec{d}_{L}}
    {\vec{d}_{L,#1}}%
}
\NewDocumentCommand{\dR}{g}{%
  \IfNoValueTF{#1}
    {\vec{d}_{R}}
    {\vec{d}_{R,#1}}%
}

\newcommand{\xmin}{x_\mathrm{min}}
\newcommand{\xmax}{x_\mathrm{max}}

\newcommand{\orcid}[1]{ORCID:~\href{https://orcid.org/#1}{#1}}
\usepackage{authblk}

\newenvironment{keywords}{\par\textbf{Key words.}}{\par}
\newenvironment{AMS}{\par\textbf{AMS subject classification.}}{\par}

\title{On the Rate of Error Growth in Time for Numerical Solutions of Nonlinear Dispersive Wave Equations}
% direct contributions to this manuscript so far
\author[1]{Hendrik~Ranocha\thanks{\orcid{0000-0002-3456-2277}}}
\author[2]{Manuel~Quezada~de~Luna\thanks{\orcid{0000-0001-7288-0367}}}
\author[2]{David~I.~Ketcheson\thanks{\orcid{0000-0002-1212-126X}}}
\affil[1]{%
Applied Mathematics,
University of Münster,
Germany}
\affil[2]{%
King Abdullah University of Science and Technology (KAUST),
Computer Electrical and Mathematical Science and Engineering Division (CEMSE),
Thuwal, 23955-6900, Saudi Arabia}
\date{September 8, 2021}

\makeatletter
\hypersetup{pdftitle={\@title}}
\makeatother

\begin{document}

\maketitle

\begin{abstract}
We study the numerical error in solitary wave solutions of nonlinear dispersive
wave equations.  A number of existing results for discretizations of solitary
wave solutions of particular equations indicate that the error grows
quadratically in time for numerical methods
that do not conserve energy, but grows only linearly for conservative methods.
We provide numerical experiments suggesting that this result extends to a
very broad class of equations and numerical methods.

\end{abstract}

\begin{keywords}
  invariant conservation,
  summation by parts,
  spectral collocation methods,
  relaxation schemes,
  error growth rate
\end{keywords}

%TODO: MSC
\begin{AMS}
  65M12,  % NA, PDEs, IVPs, IBVPs: Stability and convergence of numerical methods
  65M70,  % NA, PDEs, IVPs, IBVPs: Spectral, collocation and related methods
  65M06,  % NA, PDEs, IVPs, IBVPs: Finite difference methods
  65M60,  % NA, PDEs, IVPs, IBVPs: Finite element, Rayleigh-Ritz and Galerkin methods
  65M20,  % NA, PDEs, IVPs, IBVPs: Method of lines
  35Q35   % PDEs in connection with fluid mechanics
\end{AMS}

\section{Introduction}

In this work we study the behavior of numerical approximations of solitary wave
solutions of nonlinear wave partial differential equations (PDEs).  These
solitary waves evolve in time only by translation; this translation symmetry
is, in accordance with Noether's Theorem
\cite{Noether1918}, associated with a particular conserved functional of the PDE.
We investigate in a broad setting the phenomenon, already studied for certain
equations, that numerical methods that conserve (up to rounding errors) this
functional (or the Hamiltonian of the system) exhibit much smaller errors
over long times.

This behavior has been demonstrated for numerical approximations of
certain solutions of Hamiltonian systems.  These solutions are referred
to as relative equilibria, and are those whose trajectory lies
on a manifold defined by a symmetry group.
For such solutions, numerical methods that exactly conserve certain
invariants give rise to a leading term in the global error that grows
linearly in time \cite{duran1998numerical}.  In contrast, for general (non-conservative) numerical
solutions approximating relative equilibria, the error growth is typically
quadratic (this should be contrasted with more general estimates of numerical
error growth, which are typically exponential in time).  This behavior has been shown for
finite-dimensional Hamiltonian systems
\cite{cano1997error,cano1998error,duran1998numerical,calvo2011error}.
For dispersive nonlinear wave equations, it was first observed in the
context of solitary wave solutions of the Korteweg-de Vries (KdV) equation, by Sanz-Serna \cite{sanzserna1982explicit}
and later by Bona, Dougalis, \& Karakashian \cite{bona1986fully}.
This behavior was eventually proved for discretizations of the KdV equation by
de Frutos \& Sanz-Serna \cite{frutos1997accuracy}.

Similar results were later obtained for solitary wave solutions of
the nonlinear Schrödinger equation \cite{duran2000numerical}, Peregrine's regularized
long wave (RLW) equation (also known as the Benjamin-Bona-Mahoney (BBM)
equation) \cite{araujo2001error,momoniat2014modified}, and the generalized BBM
equation \cite{duran2002numerical,alvarez2012preservation}.  An extension
to certain other dispersive wave equations, including the generalized
KdV, generalized BBM, and generalized Benjamin-Ono equations, is presented
in \cite{alvarez2012preservation}.  The results in
\cite{momoniat2014modified} are purely experimental, while
the remaining results just cited include proofs along with numerical
demonstrations.

Most of the PDE results in this direction are specific to solitary wave
solutions.  The conservative
discretizations that yield linear error growth also exhibit {\em modified solitary waves}
\cite{araujo2001error,duran2002numerical}.
These are discrete solitary waves that are close to but different from the exact
solitary wave solutions.  The ability of these modified solitary waves
to propagate discretely without changing shape or amplitude means that
conservative methods are appealing for long-time simulations of solitary waves
and their interactions \cite{duran2003conservative}.  The concept of
a modified solitary wave also provides, for some systems, an intuitive rough explanation of
the observed error growth rates.  In a non-conservative method, the solitary
wave is approximated by a wave whose amplitude grows or diminishes linearly
in time.  Since the speed of the solitary wave is (at least to first approximation)
a linear function of the amplitude, the error in the speed of the wave also grows
linearly in time.  This leads to a quadratic error in the location (phase) of the wave.
Meanwhile, it was shown in \cite{araujo2001error,duran2002numerical,alvarez2012preservation} that
a conservative scheme yields a modified solitary wave whose amplitude and speed
are constant in time (though different from those of the true solitary wave).
Hence the phase error grows only linearly.

The foregoing results serve as the motivation for the present study.
Here we take an experimental approach, allowing the consideration of
several other dispersive wave equations as well as non-dispersive
hyperbolic systems that possess similar kinds of solutions.
In this work we provide strong evidence that the phenomenon of linear
error growth in time for conservative numerical solutions
extends to a very large class of dispersive wave equations and nonlinear
invariants, including invariants that are not quadratic, and even certain
solutions of first-order hyperbolic systems that have no dispersive terms.

We briefly introduce the spatial and temporal discretization methods we will
use in Section~\ref{sec:discretizations}.
In Section~\ref{sec:dispersive-wave-eqs}, we analyze the temporal error growth
of solitary wave solutions to several nonlinear dispersive wave equations.
Our experimental results include the
Fornberg-Whitham \cite{whitham1967variational},
Camassa-Holm \cite{camassa1993integrable},
Degasperis-Procesi \cite{degasperis2002new},
Holm-Hone \cite{holm2003nonintegrability},
and BBM-BBM equations \cite{bona2002boussinesq,bona2004boussinesq}.
For each of these models, we apply recently-constructed conservative schemes
and show experimentally that they yield improved (linear) error growth when
compared to non-conservative schemes for the same model.
To demonstrate the importance of nonlinearity for the different error growth
behaviors of conservative and non-conservative methods, we investigate a
linear dispersive wave equation in Section~\ref{sec:linear}.  There, both kinds
of methods result in the same asymptotic error growth rate, although the absolute
error of the conservative method is smaller.
We then go further outside the realm of available results and consider two
first-order nonlinear hyperbolic systems that have been shown to possess
solitary-wave-like solutions.  In Section~\ref{sec:p-system} we study the
variable-coefficient $p$-system in one dimension, with periodically varying
coefficients.  Previous work on this system has noted similarities with
dispersive nonlinear wave equations; see \cite{leveque2003solitary,2012_ketchesonleveque_periodic}.
In Section~\ref{sec:swe}, we study the two-dimensional shallow water equations
with varying bathymetry.
Finally, we summarize and discuss our results in Section~\ref{sec:summary}.

The main contributions of this work are as follows:
\begin{enumerate}
    \item We illustrate via computational experiments that the favorable behavior proved
        for conservative numerical solutions of certain nonlinear dispersive
        PDEs \cite{frutos1997accuracy,duran2000numerical,araujo2001error,alvarez2012preservation}
        holds for several other such PDEs.
    \item We show through numerous examples that not only do conservative schemes
        exhibit a better asymptotic error behavior for large times, but also a much
        smaller size of the global error even for short times.
    \item We show through a computational example that the condition
        of mass conservation, assumed in the main theorems of
        \cite{frutos1997accuracy,araujo2001error}, is necessary in
        practice (see Section~\ref{sec:proj}).
    \item We demonstrate for the first time that the aforementioned favorable
        behavior can be observed also in the
        numerical solution of a system that does not possess solitary wave solutions
        with simple translation or rotation symmetries (see Section~\ref{sec:p-system}).
    \item We demonstrate for the first time that dramatically better error
        growth for conservative methods can be observed also in systems with two
        spatial dimensions (see Section~\ref{sec:swe}).
\end{enumerate}

The source code used to generate the results of this study is
available online \cite{ranocha2021rateRepro}.
The numerical methods studied in this article are implemented in Julia
\cite{bezanson2017julia} and make use of the open source library
SummationByPartsOperators.jl \cite{ranocha2021sbp}, which utilizes
FFTW \cite{frigo2005design} for Fourier collocation methods.
We use time integration methods from \cite{rackauckas2017differentialequations}.
The plots are generated using Matplotlib \cite{hunter2007matplotlib}.

\section{Relative equilibrium solutions}\label{sec:rel-eq}
One framework for understanding the rates of error growth observed
in the present work is that of relative equilibrium theory.
Here we review very briefly this theory in the context of
finite-dimensional dynamical systems (where it is most completely developed)
and Hamiltonian PDEs.

Relative equilibria are equilibria of a reduced dynamical system obtained by
reduction modulo a set of symmetry groups of the original dynamical system.
Relative equilibria evolve only along the manifold corresponding to the
symmetry groups.  In the case of finite-dimensional Hamiltonian
systems, each symmetry group corresponds to an invariant quantity.
A relative equilibrium $u_0$ of a Hamiltonian system with Hamiltonian $\H(u)$
and invariant quantities $\inv_j(u)$ satisfies \cite[Eqn. (36)]{duran1998numerical}
\begin{align} \label{releq}
    \nabla \left( \H(u_0) - \sum_i \lambda_i \inv_i(u_0)\right) & = 0,
\end{align}
where the values $\lambda_i$ are Lagrange multipliers.
For a relative equilibrium solution of a Hamiltonian system, perturbations
in directions that lie on the manifold corresponding to the symmetry group
grow linearly in time, while perturbations in other directions grow quadratically
\cite[Lemma~2.4]{duran1998numerical}.  Therefore, if a numerical method conserves
the invariants corresponding to the symmetry group, the numerical error will
grow only linearly in time \cite[Theorem~3.2]{duran1998numerical}.

The situation is analogous but more involved for Hamiltonian PDEs like the
dispersive wave equations considered in Section~\ref{sec:dispersive-wave-eqs}.
Each of these equations can be written as a Hamiltonian system
\begin{align} \label{hamiltonian-form}
    u_t = J \delta \H(u)
\end{align}
where $\H$ is the Hamiltonian, $\delta$ is the variational derivative,
and $J$ is a skew-symmetric operator.
Each system possesses an additional invariant $\inv(u)$, such that
\begin{align} \label{translation-generator}
    J\delta \inv(u) = \partial_x u.
\end{align}
This implies that the symmetry group of translations is associated
with the invariant $\inv$; the relative equilibrium condition
(analogous to \eqref{releq}) is
\begin{align} \label{vareq}
    \delta (\H(u) - c \inv(u)) & = 0.
\end{align}
Using \eqref{hamiltonian-form} and \eqref{translation-generator},
we find that multiplying \eqref{vareq} on the left by $J$ gives the
equation $\partial_t u - c\partial_x u = 0$, satisfied by translation-invariant
solitary waves with speed $c$.  Thus, for each of the equations we
consider, solitary wave solutions correspond to relative equilibria, where the
symmetry group is generated by translation.

It is possible to show also for these systems that
numerical methods whose errors lie entirely within the appropriate manifold
yield an error that grows only linearly in time; see \eg
\cite{frutos1997accuracy,duran2000numerical,alvarez2012preservation} for examples of such analysis.
Due to the condition \eqref{vareq}, it is sufficient to conserve either
$\H$ or $\inv$, in order to obtain linear growth in time of the leading term in the
global error \cite{frutos1997accuracy,alvarez2012preservation}.
Rigorous results in this vein require a detailed perturbation analysis specific
to the PDE in question.  We do not pursue such analysis here, and focus
instead on an experimental study.

We remark here that the results proving linear error growth for KdV and RLW
solitary waves \cite{frutos1997accuracy,araujo2001error} also require the assumption of mass conservation.
The necessity of this assumption has not been investigated experimentally,
perhaps since ``sensible methods exactly conserve the mass" \cite{frutos1997accuracy}.
However, the well-known orthogonal projection schemes can be used to
conserve a nonlinear invariant at the cost of not conserving mass; in
Section~\ref{sec:proj} we investigate the asymptotic error growth for such schemes.

\section{Discretization methods}
\label{sec:discretizations}

Next, we introduce the type of discretization methods
employed in this article.

\subsection{Spatial semidiscretizations}

Proving the conservation of invariants of partial differential equations
usually requires the product/chain rule and integration by parts. It is
often useful to mimic the same procedure at the discrete level by using
summation by parts (SBP) operators, which are constructed specifically to
satisfy a discrete analogue of integration by parts. A review of the relevant
theory can be found in \cite{svard2014review,fernandez2014review,chen2020review}.
Many classes of numerical methods can be formulated within the SBP framework,
including finite difference \cite{strand1994summation},
finite volume \cite{nordstrom2001finite,nordstrom2003finite},
continuous Galerkin \cite{hicken2016multidimensional,hicken2020entropy,abgrall2020analysisI},
discontinuous Galerkin \cite{gassner2013skew},
and flux reconstruction methods \cite{ranocha2016summation}.
A brief review of how to formulate these methods in the SBP framework with
application to structure-preserving numerical methods used in this article
is given in \cite{ranocha2021broad}.

Next, we will briefly introduce SBP operators in periodic domains in one space
dimension. Extensions to multiple space dimensions can be achieved via tensor
products. We consider a grid $\vec{x} = (x_1, \dots, x_m)$ where
$x_1 = \xmin \le x_2 \dots \le x_{m} = \xmax$. All nonlinear operations will
be performed pointwise, \ie we use a collocation approach.
\begin{definition}
  Given a grid $\vec{x}$, a \emph{$p$-th order accurate $i$-th
  derivative matrix} $\D{i}$ is a matrix that satisfies
  \begin{equation}
    \forall k \in \{0, \dots, p\}\colon \quad
      \D{i} \vec{x}^k = k (k-1) \dots (k-i+1) \vec{x}^{k-i},
  \end{equation}
  with the convention $\vec{x}^0 = \vec{1}$ and $0 \vec{x}^k = \vec{0}$.
  We say $\D{i}$ is consistent if $p \ge 0$.
\end{definition}

For periodic boundary conditions (under whichsolutions at $\xmin$ and $\xmax$ are identical),
integration by parts is basically a statement about the symmetry of a derivative
operator with respect to the $L^2$ scalar product. Discretely, an approximation
of this scalar product is represented by a so-called mass matrix $\M$.\footnote{The name \emph{mass matrix} is common in the finite element literature. In classical articles on finite difference SBP operators, this matrix is often called a \emph{norm matrix}.}
\begin{definition}
\label{def:D1-periodic}
  A \emph{periodic first-derivative SBP operator}
  consists of
  a grid $\vec{x}$,
  a consistent first-derivative matrix $\D1$,
  and a symmetric and positive-definite matrix $\M$
  such that
  \begin{equation}
  \label{eq:D1-periodic}
    \M \D1 + \D1^T \M = 0.
  \end{equation}
\end{definition}
We will often refer to an operator $\D{i}$ as a (periodic) SBP operator
if the other operators (such as the mass matrix $\M$) are clear from the context.
We always assume derivative operators are consistent, but we will
usually omit this term.

\begin{definition}
\label{def:D2-periodic}
  A \emph{periodic second-derivative SBP operator}
  consists of
  a grid $\vec{x}$,
  a consistent second-derivative matrix $\D2$,
  and a symmetric and positive-definite matrix $\M$
  such that
  \begin{equation}
  \label{eq:D2-periodic}
    \M \D2 = -\A2,
    \quad
    \A2 \text{ is symmetric and positive semidefinite}.
  \end{equation}
\end{definition}

\begin{definition}
\label{def:D4-periodic}
  A \emph{periodic fourth-derivative SBP operator}
  consists of
  a grid $\vec{x}$,
  a consistent fourth-derivative matrix $\D4$,
  and a symmetric and positive-definite matrix $\M$
  such that
  \begin{equation}
  \label{eq:D4-periodic}
    \M \D4 = \A4,
    \quad
    \A4 \text{ is symmetric and positive semidefinite}.
  \end{equation}
\end{definition}

In this work, we use Fourier (pseudospectral) collocation methods
\cite{kreiss1972comparison,fornberg1975fourier} because of their
efficiency and accuracy for smooth problems.  This allows us to
ensure, with reasonable grids, that the temporal discretization error
dominates the spatial error for all of our 1D numerical tests.  Nevertheless, all classes of
methods within the SBP framework can be used and analyzed interchangeably to
construct conservative semidiscretizations.

\subsection{Time integration methods}

To transfer the semidiscrete conservation results to fully-discrete schemes,
the recent relaxation approach is used
\cite{ketcheson2019relaxation,ranocha2020relaxation,ranocha2020relaxationHamiltonian,ranocha2020general,ranocha2020fully}.
Related ideas date back to \cite{sanzserna1982explicit,sanzserna1983method} and
\cite[pp. 265--266]{dekker1984stability} but have been developed widely just
recently.

Semidiscretizations reduce an initial boundary value PDE to an initial value ODE
\begin{equation}
  u'(t) = f(u(t)), \quad u(0) = u^0.
\end{equation}
Throughout this work we use superscripts to denote the time step, and here
$u^0$ is the initial condition.
To conserve a nonlinear invariant functional $J(u)$ discretely in a one-step method,
we require $J(u^n) = J(u^{n-1}) = J(u^0)$. For a Runge-Kutta method
\begin{subequations}
\label{eq:RK-step}
\begin{align}
\label{eq:RK-stages}
  y_i
  &=
  u^n + \dt \sum_{j=1}^{s} a_{ij} \, f(t_n + c_j \dt, y_j),
  \qquad i \in \set{1, \dots, s},
  \\
\label{eq:RK-final}
  u(t_n + \dt) \approx u^{n+1}
  &=
  u^n + \dt \sum_{i=1}^{s} b_{i} \, f(t_n + c_i \dt, y_i),
\end{align}
\end{subequations}
we define the update direction
\begin{align}
  d^n := \sum_{i=1}^{s} b_{i} f_i,
\end{align}
where we use the abbreviation $f_i := f(t_n + c_i \dt, y_i)$.
Since the new solution $u^n$ will not be conservative in general,
we modify the update formula \eqref{eq:RK-final} by introducing a relaxation
parameter $\gamma^n$ and use
\begin{equation}
\label{eq:u-gamma}
  u(t_n + \gamma^n\dt) \approx u^{n+1}_\gamma = u^n + \gamma^n \dt d^n.
\end{equation}
To guarantee conservation of $J$, $\gamma^n$ is computed as root of
\begin{equation}
\label{eq:gamma}
  J(u^{n+1}_\gamma) = J(u^n).
\end{equation}
This scalar nonlinear equation can be solved efficiently using algorithms such
as the one of \cite{alefeld1995algorithm}. By the general theory on relaxation
methods, there is exactly one root $\gamma^n = 1 + \O(\dt^{p-1})$ of
\eqref{eq:gamma} \cite[Theorem~2.14]{ranocha2020general}.
By construction of the relaxation parameter $\gamma^n$, the resulting solution
$u^{n+1}_\gamma$ conserves the invariant $J$. Moreover, the relaxation approach
also automatically conserves linear invariants (as long as the semi-discretization
conserves them), which is an important prerequisite of analytical results on
linear vs. quadratic error growth in time. Finally, the solution \eqref{eq:u-gamma}
has the same local order of accuracy as that given by the original Runge-Kutta
method \eqref{eq:RK-step}.

\section{Nonlinear dispersive wave equations}
\label{sec:dispersive-wave-eqs}

In this section, we consider several nonlinear dispersive wave equations.
Each possesses an additional invariant $\inv(u)$, and possesses stable
solitary wave solutions that satisfy the relative equilibrium condition
\eqref{vareq}.  Each equation also possesses one or more linear invariants,
which are preserved by the discretizations we use.  Since these invariants
do not require any special numerical treatment, we do not discuss them
explicitly.

For each equation we give a numerical method that conserves $\inv(u)$,
based on SBP operators applied to a split form of the equation.  In
each case, $D_j$ always denotes a periodic $j$th-derivative
SBP operator.  The matrices used for discretization of a given equation
are also assumed to have in common a corresponding diagonal mass matrix $M$.
We will sometimes require an assumption that the derivative matrices $D_j$
commute; this will be stated explicitly when it is required.

Conservative numerical methods for these equations have been developed and
analyzed in \cite{ranocha2021broad} using various classes of SBP operators.
We will use split forms to preserve local conservation laws, since the chain
and product rules cannot hold discretely for many high-order discretizations
\cite{ranocha2019mimetic}. These are related to entropy-conservative
methods in the sense of Tadmor \cite{tadmor1987numerical}. While certain split
forms have been known for some time \cite[eq. (6.40)]{richtmyer1967difference},
they are still state of the art and enable the construction of numerical methods
with desirable properties
\cite{gassner2016split,shima2021preventing,winters2018comparative,ranocha2018generalised}.

For each equation, after reviewing the Hamiltonian structure and split-form
conservative spatial discretization, we present a numerical test of error
growth for a solitary wave solution.
The solitary wave solutions are obtained via the Petviashvili method
\cite{petviashvili1976equation} using a Fourier collocation method with
$2^{16}$ nodes. The resulting solitary wave profile is interpolated to
a grid using fewer nodes and used as initial condition. A conservative
semidiscretization is obtained by using Fourier collocation methods
in space.  This semidiscretization is integrated in time using the
fifth-order accurate Runge-Kutta method of Tsitouras \cite{tsitouras2011runge}
with adaptive time stepping based on local error estimates.  We show results
for both this method without relaxation (non-conservative) and with relaxation
in time (conservative).
The errors plotted are discrete $L^2$ errors computed using the discrete
norm induced by the mass matrix $\M$, which is the identity matrix times the grid spacing for Fourier collocation methods.

\subsection{Fornberg-Whitham equation}
\label{sec:fw}

For convenience we define $\S = \I - \partial_x^2$.  Then
the Fornberg-Whitham equation \cite{whitham1967variational}
\begin{equation}
\label{eq:fw-dir}
\begin{aligned}
  \S \partial_t u(t,x)
  + \S \partial_x f(u(t,x))
  + \partial_x u(t,x)
  &= 0,
  %&& t \in (0, T), x \in (\xmin, \xmax),
  \\
  u(0, x) &= u^0(x),
  %&& x \in [\xmin, \xmax],
  \\
  f(u) &= \frac{u^2}{2},
\end{aligned}
\end{equation}
with periodic boundary conditions can also be written as
\begin{equation}
\label{eq:fw-inv}
  \partial_t u(t,x) + \partial_x f(u(t,x)) + \S_P^{-1} \partial_x u(t,x) = 0,
\end{equation}
where $\S_P^{-1}$ is the inverse of the elliptic
operator $\I - \partial_x^2$ with periodic boundary conditions.
%In addition to the linear invariants
%\begin{subequations}
%\label{eq:fw-invariants}
%\begin{align}
%\label{eq:fw-invariants-linear}
%  & \int_{\xmin}^{\xmax} u, &
%  & \int_{\xmin}^{\xmax} (u - \partial_x^2 u),
%\end{align}
This equation possesses the Hamiltonian and additional nonlinear invariant
\begin{align}
\label{eq:fw-invariants-nonlinear}
  \H(u) & = \xint \left(\frac{u^3}{6} + \frac{1}{2}u \S_P^{-1} u\right) \dif x, &
  \inv(u) &= \int_{\xmin}^{\xmax} u^2 \dif x.
\end{align}
It can be written in the form \eqref{hamiltonian-form} by taking $J=-\partial_x$.
The discrete equivalent $\vec{u}^T \M \vec{u}$
of $\inv(u)$ is conserved by semidiscretizations of the form
\begin{equation}
\label{eq:fw-inv-periodic-SBP}
  \partial_t \vec{u}
  + \frac{1}{3} \D1 \vec{u}^2
  + \frac{1}{3} \mat{u} \D1 \vec{u}
  + (\I - \D2)^{-1} \D1 \vec{u}
  =
  \vec{0}
\end{equation}
where $\D1$ and $\D2$ commute \cite{ranocha2021broad}.

\subsection{Camassa-Holm equation}
\label{sec:ch}

The Camassa-Holm equation \cite{camassa1993integrable}
\begin{equation}
\label{eq:ch-dir}
\begin{aligned}
  \S \partial_t u(t,x)
  + \partial_x \biggl(
    \frac{3}{2} u(t,x)^2
    - \frac{1}{2} (\partial_x u(t,x))^2
    - u(t,x) \partial_x^2 u(t,x)
  \biggr)
  &= 0,
%   \\& t \in (0, T), x \in (\xmin, \xmax),
  \\
  u(0, x) &= u^0(x),
%   \\& x \in [\xmin, \xmax],
\end{aligned}
\end{equation}
with periodic boundary conditions can also be written as
\begin{multline}
\label{eq:ch-inv}
  \partial_t u
  + S_P^{-1} \bigl(
    \partial_x u^2
    + u \partial_x u
    - \alpha \partial_x (u \partial_x^2 u)
    - (1 - \alpha) \partial_x^2 (u \partial_x u)
    \\
    - (2 \alpha - 1) (\partial_x u) (\partial_x^2 u)
  \bigr)
  = 0,
\end{multline}
where $\alpha \in \R$ is a parameter determining the split form \cite{ranocha2021broad}.
%In addition to the mass $\int u$,
This equation possesses the Hamiltonian and an additional nonlinear invariant
\begin{align}
\label{eq:ch-invariants}
  \H(u) & = \frac{1}{2} \xint \left(u^3 + u (\partial_x u)^2\right) \dif x, &
  \inv(u) &= \frac{1}{2} \int_{\xmin}^{\xmax} \bigl( u^2 + (\partial_x u)^2 \bigr) \dif x.
\end{align}
It can be written in the form \eqref{hamiltonian-form} by taking $J=-\frac{1}{2}\S_P^{-1}\partial_x$.
The discrete equivalent
$\frac{1}{2} \vec{u}^T \M (\I - \D2) \vec{u}$
of $\inv(u)$ is conserved by semidiscretizations of the form \cite{ranocha2021broad}
\begin{equation}
\label{eq:ch-inv-periodic-SBP}
  \partial_t \vec{u}
  + (\I - \D2)^{-1} \bigl(
    \D1 \vec{u}^2
    + \vec{u} \D1 \vec{u}
    - \frac{1}{2} \D1 (\vec{u} \D2 \vec{u})
    - \frac{1}{2} \D2 (\vec{u} \D1 \vec{u})
  \bigr)
  =
  \vec{0}.
\end{equation}

\subsection{Degasperis-Procesi equation}
\label{sec:dp}

The Degasperis-Procesi equation \cite{degasperis2002new}
\begin{equation}
\label{eq:dp-dir}
\begin{aligned}
  \S \partial_t u(t,x)
  + (4 \I - \partial_x^2) \partial_x f(u(t,x))
  &= 0,
  %&& t \in (0, T), x \in (\xmin, \xmax),
  \\
  u(0, x) &= u^0(x),
  %&& x \in [\xmin, \xmax],
  \\
  f(u) &= \frac{u^2}{2},
\end{aligned}
\end{equation}
with periodic boundary conditions can also be written as
\begin{equation}
\label{eq:dp-inv}
  \partial_t u(t,x)
  + S_P^{-1} (4\I - \partial_x^2) \partial_x f(u(t,x))
  = 0,
\end{equation}
where $S_P^{-1}$ is the inverse of the elliptic
operator $\I - \partial_x^2$ with periodic boundary conditions.
%In addition to the linear functional
%\begin{align}
%  \label{eq:dp-invariants-linear}
%  \int_{\xmin}^{\xmax} ( u - \partial_x^2 u ),
%\end{align}
This equation admits (among others) the nonlinear invariants \cite{degasperis2002new}
\begin{align}
  \label{eq:dp-invariants-nonlinear}
  \H(u) & = -\xint \frac{1}{6} u^3 \dif x, &
  \inv(u) &= \frac{1}{2} \int_{\xmin}^{\xmax} \bigl( (u - \partial_x^2 u) v \bigr) \bigr) \dif x,
  && v = (4\I - \partial_x^2)^{-1} u.
\end{align}
It can be written in the form \eqref{hamiltonian-form} by taking
$J = S_P^{-1} (4\I - \partial_x^2) \partial_x$.
The discrete equivalent
$\frac{1}{2} \vec{u}^T (\I - \D2)^T \M (4 \I - \D2)^{-1} \vec{u}$
of $\inv(u)$ is conserved by semidiscretizations of the form \cite{ranocha2021broad}
\begin{equation}
\label{eq:dp-inv-periodic-SBP}
  \partial_t \vec{u}
  + \frac{1}{3} (\I - \D2)^{-1} (4\I - \D2) \left(
    \D1 \vec{u}^2 + \vec{u} \D1 \vec{u}
  \right)
  =
  \vec{0}.
\end{equation}

\subsection{BBM-BBM system}
\label{sec:bbm_bbm}

The BBM-BBM system
\cite{bona2002boussinesq,bona2004boussinesq,antonopoulos2009initial,antonopoulos2010numerical}
\begin{equation}
\label{eq:bbm_bbm-dir}
\begin{aligned}
  \partial_t \eta(t,x)
  + \partial_x u(t,x)
  + \partial_x \bigl( \eta(t,x) u(t,x) \bigr)
  - \partial_t \partial_x^2 \eta(t,x)
  &= 0,
%   && t \in (0, T), x \in (\xmin, \xmax),
  \\
  \partial_t u(t,x)
  + \partial_x \eta(t,x)
  + \partial_x \frac{u(t,x)^2}{2}
  - \partial_t \partial_x^2 u(t,x)
  &= 0,
%   && t \in (0, T), x \in (\xmin, \xmax),
  \\
  \eta(0, x) &= \eta^0(x),
%   && x \in [\xmin, \xmax],
  \\
  u(0, x) &= u^0(x),
%   && x \in [\xmin, \xmax],
\end{aligned}
\end{equation}
with periodic boundary conditions can also be written as
\begin{equation}
\label{eq:bbm_bbm-inv}
\begin{aligned}
  \partial_t \eta(t,x)
  + S_P^{-1} \partial_x \bigl( u(t,x) + \eta(t,x) u(t,x) \bigr)
  &= 0,
  \\
  \partial_t u(t,x)
  + S_P^{-1} \partial_x \biggl( \eta(t,x) + \frac{u(t,x)^2}{2} \biggr)
  &= 0.
\end{aligned}
\end{equation}
This equation admits the nonlinear invariants
\begin{align}
\label{eq:bbm_bbm-invariants}
  \H(\eta, u) &= -\frac{1}{2}\int_{\xmin}^{\xmax} (\eta^2 + (1 + \eta) u^2) \dif x, &
  \inv(\eta, u) &= \int_{\xmin}^{\xmax} (\eta u + (\partial_x \eta)(\partial_x u)) \dif x,
\end{align}
and can be written in the form \eqref{hamiltonian-form} by taking
\begin{align*}
    J &= \begin{pmatrix} 0 & S^{-1}\partial_x  \\ S^{-1}\partial_x  & 0 \end{pmatrix}.
\end{align*}
As mentioned in Section~\ref{sec:rel-eq}, we expect to observe linear
error growth whenever the numerical method conserves \emph{either}
$\H(\eta, u)$ or $\inv(\eta, u)$.  For this example, we demonstrate this by testing
methods that conserve each of these quantities separately.

The discrete equivalent
$-\frac{1}{2} (\vec{\eta}^T M \vec{\eta} + \vec{u}^T M (\vec{1} + \vec{\eta}) \vec{u})$
of $\H(\eta, u)$ is conserved by semidiscretizations of the form
\begin{equation}
\label{eq:bbm_bbm-inv-SBP}
\begin{aligned}
  \partial_t \vec{\eta}
  + (\I - \D2)^{-1} \D1 ( \vec{u} + \vec{\eta} \vec{u} )
  &= \vec{0},
  \\
  \partial_t \vec{u}
  + (\I - \D2)^{-1} \D1 \biggl( \vec{\eta} + \frac{1}{2} \vec{u}^2 \biggr)
  &= \vec{0}.
\end{aligned}
\end{equation}
where $\D1, \D2$ commute \cite{ranocha2021broad}.

Conservation of the discrete equivalent
$\vec{u}^T M (\I - \D2) \vec{\eta}$
of $\inv(\eta, u)$ can be achieved with the standard Galerkin
method \cite{mitsotakis2021conservative} or SBP methods using the split form
\begin{equation}
\label{eq:bbm_bbm-inv-SBP-quadratic}
\begin{aligned}
  \partial_t \vec{\eta}
  + (\I - \D2)^{-1} \D1 ( \vec{u} + \vec{\eta} \vec{u} )
  &= \vec{0},
  \\
  \partial_t \vec{u}
  + (\I - \D2)^{-1} (\D1 \vec{\eta} + \vec{u} \D1 \vec{u})
  &= \vec{0};
\end{aligned}
\end{equation}
see Theorem~\ref{thm:bbm_bbm-inv-SBP-quadratic} below.

\begin{theorem}
\label{thm:bbm_bbm-inv-SBP-quadratic}
  If $\D1$ is a periodic first-derivative SBP operator
  and $\D2$ is  a periodic second-derivative SBP operator with diagonal mass matrix,
  then the semidiscretization \eqref{eq:bbm_bbm-inv-SBP-quadratic}
  conserves the quadratic invariant $\inv$ (defined in
  \eqref{eq:bbm_bbm-invariants}).
\end{theorem}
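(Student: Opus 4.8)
The plan is to differentiate the proposed discrete invariant $\inv_d := \vec{u}^T \M (\I - \D2) \vec{\eta}$ (the discrete equivalent of $\inv$ stated just before the theorem) along solutions of \eqref{eq:bbm_bbm-inv-SBP-quadratic} and verify that the time derivative vanishes. Two structural observations set the stage. First, Definition~\ref{def:D2-periodic} gives $\M\D2 = -\A2$ with $\A2$ symmetric, and $\M$ is symmetric, so $\M(\I - \D2) = \M + \A2$ is symmetric; hence $\inv_d$ is a genuine symmetric quadratic form and $(\I-\D2)^T \M = \M(\I - \D2)$. Second, Definition~\ref{def:D1-periodic} gives $\M\D1 + \D1^T\M = 0$, so $\M\D1 = -\D1^T\M$ is skew-symmetric and $\vec{w}^T \M\D1 \vec{w} = 0$ for every $\vec{w}$.

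Using the symmetry of $\M(\I-\D2)$ I would write
\[
  \frac{\dif}{\dif t}\inv_d
  = \vec{\eta}^T \M(\I - \D2)\,\partial_t\vec{u}
  + \vec{u}^T \M(\I - \D2)\,\partial_t\vec{\eta},
\]
and then substitute the evolution equations in the rearranged form $(\I-\D2)\partial_t\vec{\eta} = -\D1(\vec{u} + \vec{\eta}\vec{u})$ and $(\I-\D2)\partial_t\vec{u} = -(\D1\vec{\eta} + \vec{u}\D1\vec{u})$. The factor $\M(\I-\D2)(\I-\D2)^{-1}$ collapses to $\M$, leaving
\[
  \frac{\dif}{\dif t}\inv_d
  = -\vec{\eta}^T\M(\D1\vec{\eta} + \vec{u}\D1\vec{u})
    - \vec{u}^T\M\D1(\vec{u} + \vec{\eta}\vec{u}).
\]
The ``pure'' terms $\vec{\eta}^T\M\D1\vec{\eta}$ and $\vec{u}^T\M\D1\vec{u}$ drop out by skew-symmetry of $\M\D1$, so everything reduces to the identity $\vec{\eta}^T\M(\vec{u}\D1\vec{u}) + \vec{u}^T\M\D1(\vec{\eta}\vec{u}) = 0$.

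This last cancellation is the crux of the proof, and the place where the diagonality hypothesis on $\M$ is used. Since $\M$ is diagonal it commutes with the pointwise-multiplication matrices $\diag(\vec{u})$ and $\diag(\vec{\eta})$, so $\vec{\eta}^T\M(\vec{u}\D1\vec{u}) = \vec{\eta}^T\M\diag(\vec{u})\D1\vec{u} = (\vec{\eta}\vec{u})^T\M\D1\vec{u}$, where $\vec{\eta}\vec{u}$ denotes the componentwise product; and, using $\M\D1 = -\D1^T\M$ together with the commuting of the diagonal matrices, $\vec{u}^T\M\D1(\vec{\eta}\vec{u}) = -(\D1\vec{u})^T\M(\vec{\eta}\vec{u}) = -(\vec{\eta}\vec{u})^T\M\D1\vec{u}$. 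The two terms are negatives of one another, hence $\frac{\dif}{\dif t}\inv_d = 0$. I expect this step to be the only delicate point: everything else is bookkeeping with the SBP identities \eqref{eq:D1-periodic} and \eqref{eq:D2-periodic}, while the role of the split term $\vec{u}\D1\vec{u}$ in the second equation of \eqref{eq:bbm_bbm-inv-SBP-quadratic} is precisely to supply the term needed for this cancellation; note that, in contrast to some of the other semidiscretizations in this section, no commutativity of $\D1$ and $\D2$ is required.
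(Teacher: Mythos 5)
Your proposal is correct and follows essentially the same route as the paper's proof: differentiate the quadratic form $\vec{u}^T \M (\I - \D2) \vec{\eta}$, use the symmetry of $\M(\I-\D2)$ to collect both terms, cancel the factor $(\I-\D2)^{-1}$, kill the pure terms by skew-symmetry of $\M\D1$, and cancel the cross terms using diagonality of $\M$. You simply spell out the final cancellation in more detail than the paper does, and you correctly omit the paper's additional (and, for this statement, unnecessary) verification that the linear mass functionals are also conserved.
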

\begin{proof}
  Using $\vec{1}^T \M (\I - \D2)^{-1} = \vec{1}^T \M$
  \cite[Lemma~2.28]{ranocha2021broad} and $\vec{1}^T \M \D1 = \vec{0}^T$
  \cite[Lemma~2.27]{ranocha2021broad} results in
  \begin{equation}
    \vec{1}^T \M \partial_t \vec{\eta}
    =
    - \vec{1}^T \M (\I - \D2)^{-1} \D1 ( \vec{u} + \vec{\eta} \vec{u} )
    =
    0.
  \end{equation}
  Applying the SBP property \eqref{eq:D1-periodic} additionally and using that
  $\M$ is diagonal yields
  \begin{equation}
    \vec{1}^T \M \partial_t \vec{u}
    =
    - \vec{1}^T \M (\I - \D2)^{-1} ( \D1 \vec{\eta} + \vec{u} \D1 \vec{u} )
    =
    - \vec{u}^T \M \D1 \vec{u}
    = 0.
  \end{equation}
  Finally, similar arguments based on the symmetry of $\I - \D2$ with respect
  to the mass matrix $\M$ lead to
  \begin{equation}
  \begin{aligned}
    \partial_t \inv(\vec{\eta}, \vec{u})
    &=
      \vec{u}^T \M (\I - \D2) \partial_t \vec{\eta}
    + \vec{\eta}^T \M (\I - \D2) \partial_t \vec{u}
    \\
    &=
    - \vec{u}^T \M \D1 ( \vec{u} + \vec{\eta} \vec{u} )
    - \vec{\eta}^T \M ( \D1 \vec{\eta} + \vec{u} \D1 \vec{u} )
    =
    0.
  \end{aligned}
    \qedhere
  \end{equation}
\end{proof}

\subsection{Holm-Hone equation}
\label{sec:hh}

As our last example in this section, we consider
the Holm-Hone equation \cite{holm2003nonintegrability}:
\begin{equation}
\label{eq:hh-dir}
\begin{aligned}
  (4 - 5 \partial_x^2 + \partial_x^4) \partial_t u
  + u \partial_x^5 u
  + 2 (\partial_x u) \partial_x^4 u
  - 5 u \partial_x^3 u
  - 10 (\partial_x u) \partial_x^2 u
  + 12 u \partial_x u
  &= 0,
%   \\& t \in (0, T), x \in (\xmin, \xmax),
  \\
  u(0, x) &= u^0(x),
%   \\& x \in [\xmin, \xmax],
\end{aligned}
\end{equation}
with periodic boundary conditions.  It can also be written as
\begin{equation}
\label{eq:hh-inv}
  \partial_t u
  + (4\I - 5 \partial_{x,P}^2 + \partial_{x,P}^4)^{-1} \left(
      \partial_x \bigl( u (4 \I - 5 \partial_x^2 + \partial_x^4) u \bigr)
    + (\partial_x u) (4 \I - 5 \partial_x^2 + \partial_x^4) u
  \right)
  = 0,
\end{equation}
where $(4\I - 5 \partial_{x,P}^2 + \partial_{x,P}^4)^{-1}$ is the
inverse of the elliptic operator $4\I - 5 \partial_x^2 + \partial_x^4$
with periodic boundary conditions.
%The functionals
%\begin{subequations}
%\label{eq:hh-invariants}
%\begin{align}
%  \label{eq:hh-invariants-mass}
%  J^{\text{HH}}_1(u)
%  &= \int_{\xmin}^{\xmax} u,
%  \\
%  \label{eq:hh-invariants-linear}
%  J^{\text{HH}}_2(u)
%  &= \int_{\xmin}^{\xmax} (4\I - \partial_x^2) (\I - \partial_x^2) u
%  = \int_{\xmin}^{\xmax} (4\I - 5 \partial_x^2 + \partial_x^4) u,
%  \\
This equation can be written in the form \eqref{hamiltonian-form} using the
nonlinear invariant
\cite{holm2003nonintegrability,wang2019symmetry}
\begin{align}
  \label{eq:hh-invariants-quadratic}
  \H(u)
  &= \frac{1}{2} \int_{\xmin}^{\xmax} u (4\I - \partial_x^2) (\I - \partial_x^2) u
  = \frac{1}{2} \int_{\xmin}^{\xmax} \bigl( 4 u^2 + 5 (\partial_x u)^2 + (\partial_x^2 u)^2 \bigr).
\end{align}
Unlike the previous equations, this one appears to have no second nonlinear
invariant; instead the translation symmetry is generated by the linear invariant
\begin{align}
    \inv(u) & = u - \partial_x^2 u.
\end{align}
The relevant operator appearing in \eqref{hamiltonian-form} is
\begin{align*}
    J & = - (\inv \partial_x + \partial_x \inv).
\end{align*}
Both the discrete equivalent
$\vec{1}^T M (\I - \D2) \vec{u}$
of the linear invariant $\inv(u)$ and the discrete equivalent
$\frac{1}{2} \vec{u}^T M (4 \I - 5 \D2 + \D4) \vec{u}$
of the nonlinear invariant $\H(u)$ are
conserved by semidiscretizations of the form
\begin{equation}
\label{eq:hh-inv-periodic-SBP}
\begin{split}
  \partial_t \vec{u}
  =
  - (4\I - 5 \D2 + \D4)^{-1} \Bigl(
      \D1 \bigl( \vec{u} (4\I - 5 \D2 + \D4) \vec{u} \bigr)
    + (\D1 \vec{u}) (4\I - 5 \D2 + \D4) \vec{u}
  \Bigr)
\end{split}
\end{equation}
where $\D1, \D2, \D4$ commute \cite{ranocha2021broad}.

\subsection{Numerical results}
For each of the five equations just studied, we perform a similar numerical
experiment.
We use a numerically generated smooth solitary wave solution as initial condition in
a periodic domain, and apply the corresponding SBP spatial discretization with
Fourier collocation methods in space and Tsitouras' fifth-order
Runge-Kutta method \cite{tsitouras2011runge} in time, with or without using
relaxation to enforce conservation of a nonlinear invariant $\inv$.
Specific parameters for each simulation are shown in Table~\ref{tbl:params}.

\begin{table}[!htp]
\begin{center}
\caption{Summary of numerical simulations. \#DOF denotes the number of the
         degrees of freedom (number of grid points) of the spatial
         semidiscretization. Tol. is the local error tolerance used for adaptive
         time stepping.}
\label{tbl:params}
\begin{tabular*}{\linewidth}{@{\extracolsep{\fill}}cccccc@{}}
  \toprule
  Equation & Discretization & Domain & \#DOF & Tol. & Fig. \\
  \midrule
  Fornberg-Whitham \eqref{eq:fw-dir} & \eqref{eq:fw-inv-periodic-SBP} & $[-80,80]$ & $2^8$ & 1e-5 & \ref{fig:fw-error} \\
  Camassa-Holm \eqref{eq:ch-dir} & \eqref{eq:ch-inv-periodic-SBP} & $[-40,40]$ & $2^9$ & 1e-7 & \ref{fig:ch-error} \\
  Degasperis-Procesi \eqref{eq:dp-dir} & \eqref{eq:dp-inv-periodic-SBP} & $[-40,40]$ & $2^8$ & 1e-5 & \ref{fig:dp-error} \\
  BBM-BBM \eqref{eq:bbm_bbm-dir} & \eqref{eq:bbm_bbm-inv-SBP} & $[-40,40]$ & $2^8$ & 1e-5 & \ref{fig:bbm_bbm-error} \\
  %BBM-BBM \eqref{eq:bbm_bbm-dir} & \eqref{eq:bbm_bbm-inv-SBP-quadratic} & $[-40,40]$ & $2^8$ & 1e-5 & \ref{fig:dp-error} \\
  Holm-Hone \eqref{eq:hh-dir} & \eqref{eq:hh-inv-periodic-SBP} & $[-40,40]$ & $2^9$ & 1e-9 & \ref{fig:hh-error}
  \\
  \bottomrule
\end{tabular*}
\end{center}
\end{table}

\begin{figure}[htb]
\centering
  \begin{subfigure}{0.4\textwidth}
  \centering
    \includegraphics[width=\textwidth]{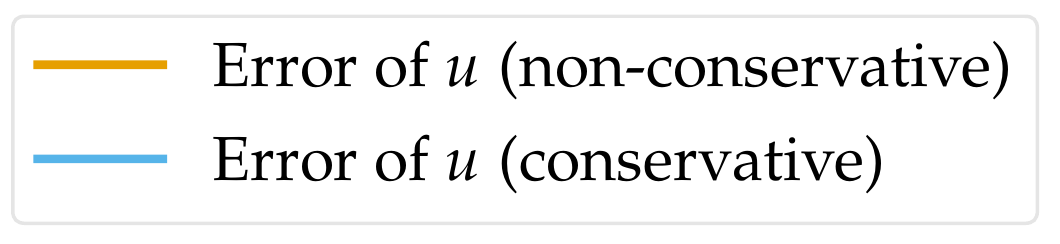}
  \end{subfigure}%
  \\
  \begin{subfigure}{0.49\textwidth}
  \centering
    \includegraphics[width=\textwidth]{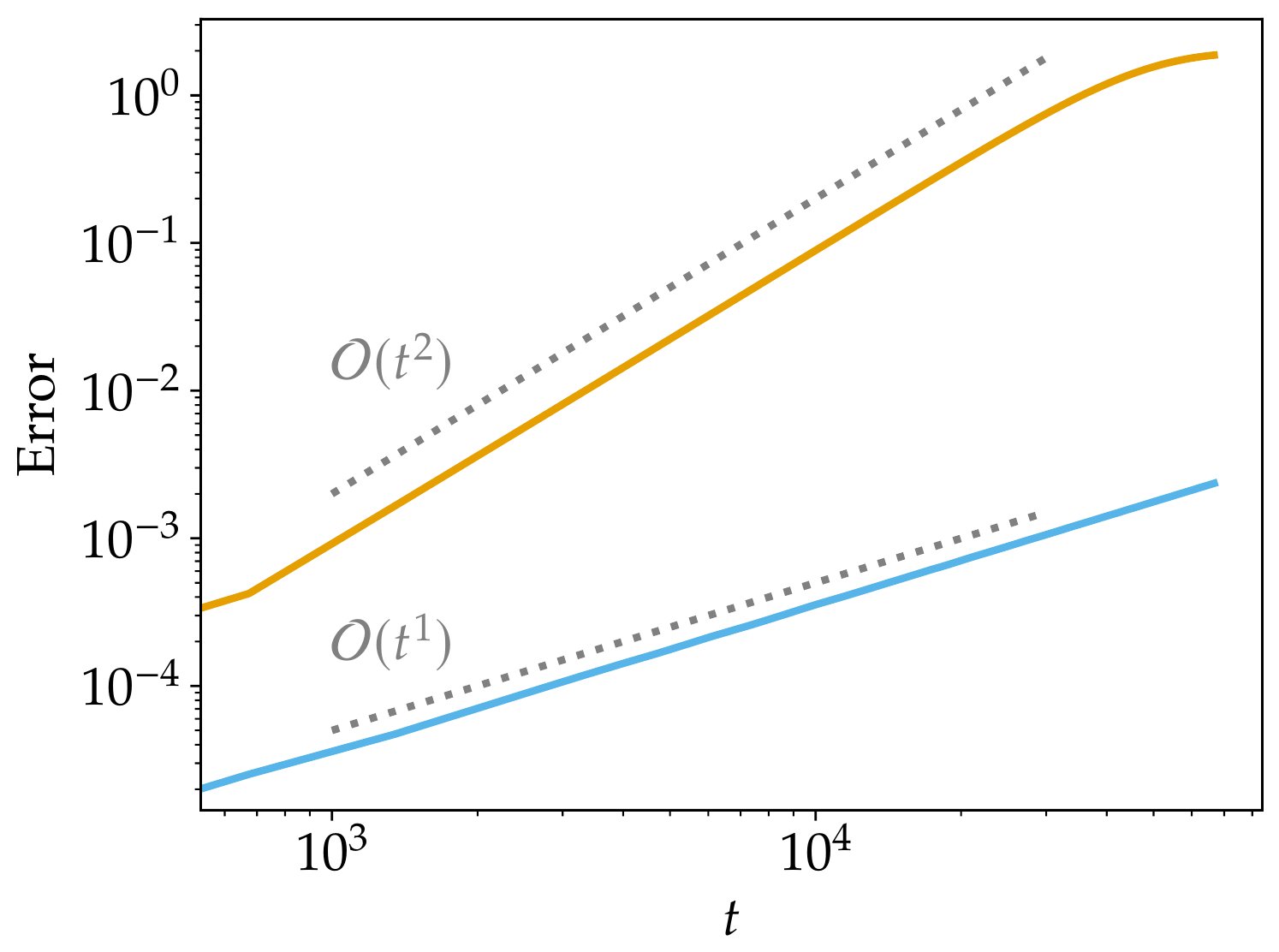}
    \caption{Fornberg-Whitham equation \eqref{eq:fw-dir}.}
    \label{fig:fw-error}
  \end{subfigure}%
  \hfill
  \begin{subfigure}{0.49\textwidth}
  \centering
    \includegraphics[width=\textwidth]{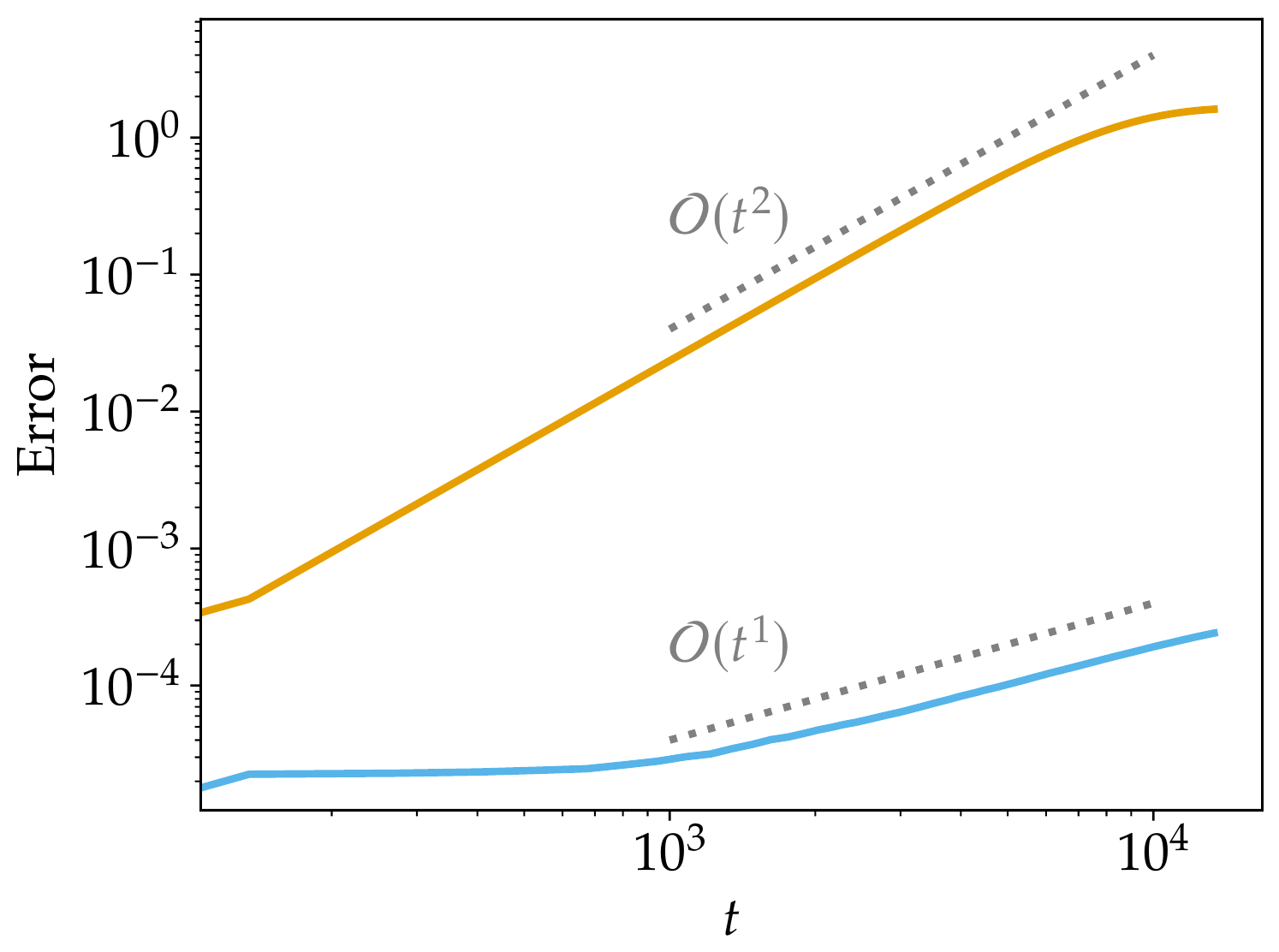}
    \caption{Camassa-Holm equation \eqref{eq:ch-dir}.}
    \label{fig:ch-error}
  \end{subfigure}%
  \\
  \begin{subfigure}{0.49\textwidth}
  \centering
    \includegraphics[width=\textwidth]{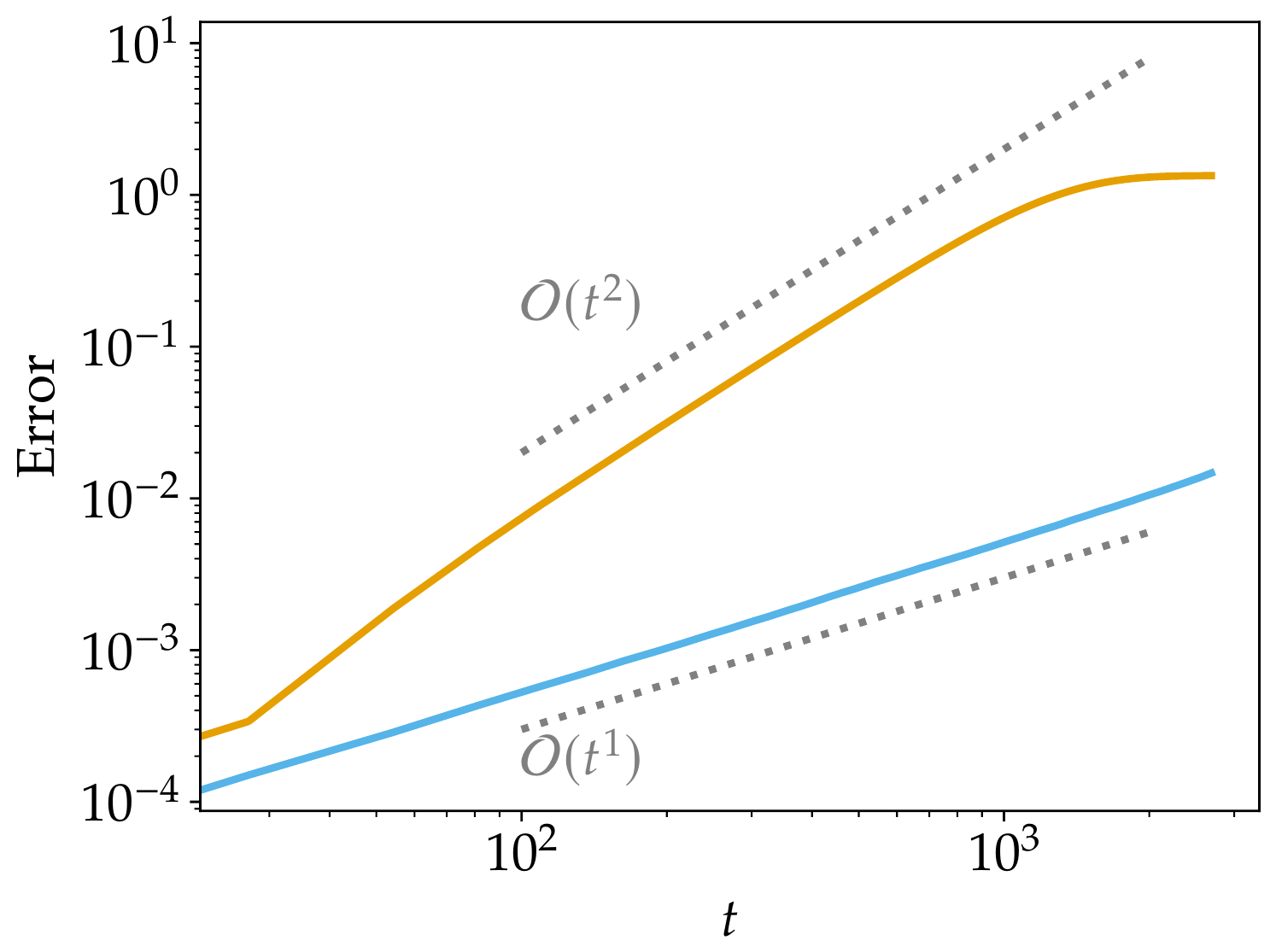}
    \caption{Degasperis-Procesi equation \eqref{eq:dp-dir}.}
    \label{fig:dp-error}
  \end{subfigure}%
  \hfill
  \begin{subfigure}{0.49\textwidth}
  \centering
    \includegraphics[width=\textwidth]{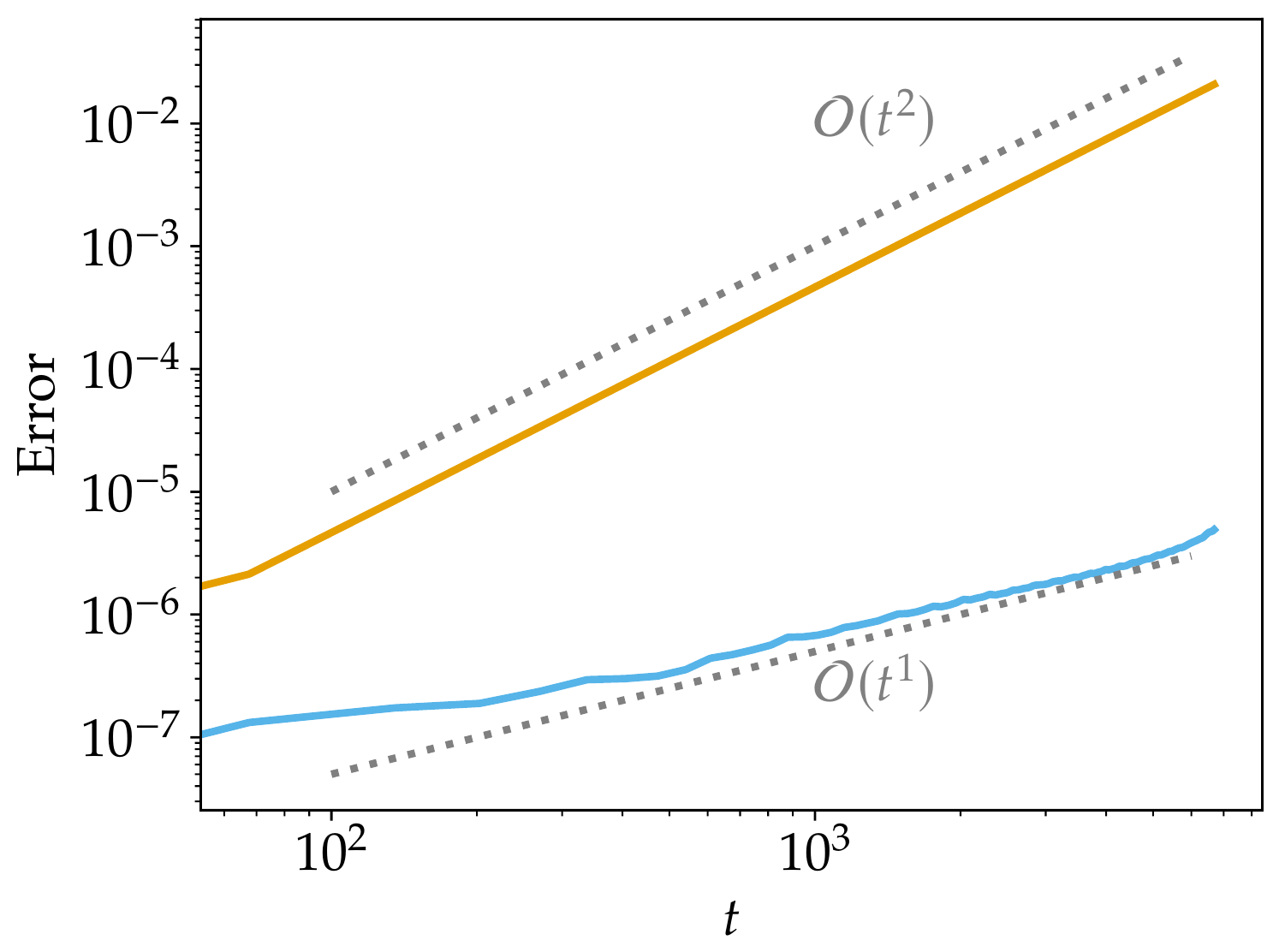}
    \caption{Holm-Hone equation \eqref{eq:hh-dir}.}
    \label{fig:hh-error}
  \end{subfigure}%
  \\
  \caption{Error growth in time for numerically generated solitary wave solutions
           using Fourier collocation in space and 5th-order Runge-Kutta in time.
           The conservative method uses relaxation to enforce conservation of
           a nonlinear invariant.}
  \label{fig:fw-ch-dp-hh-error}
\end{figure}

\begin{figure}[htb]
\centering
  \includegraphics[width=0.9\textwidth]{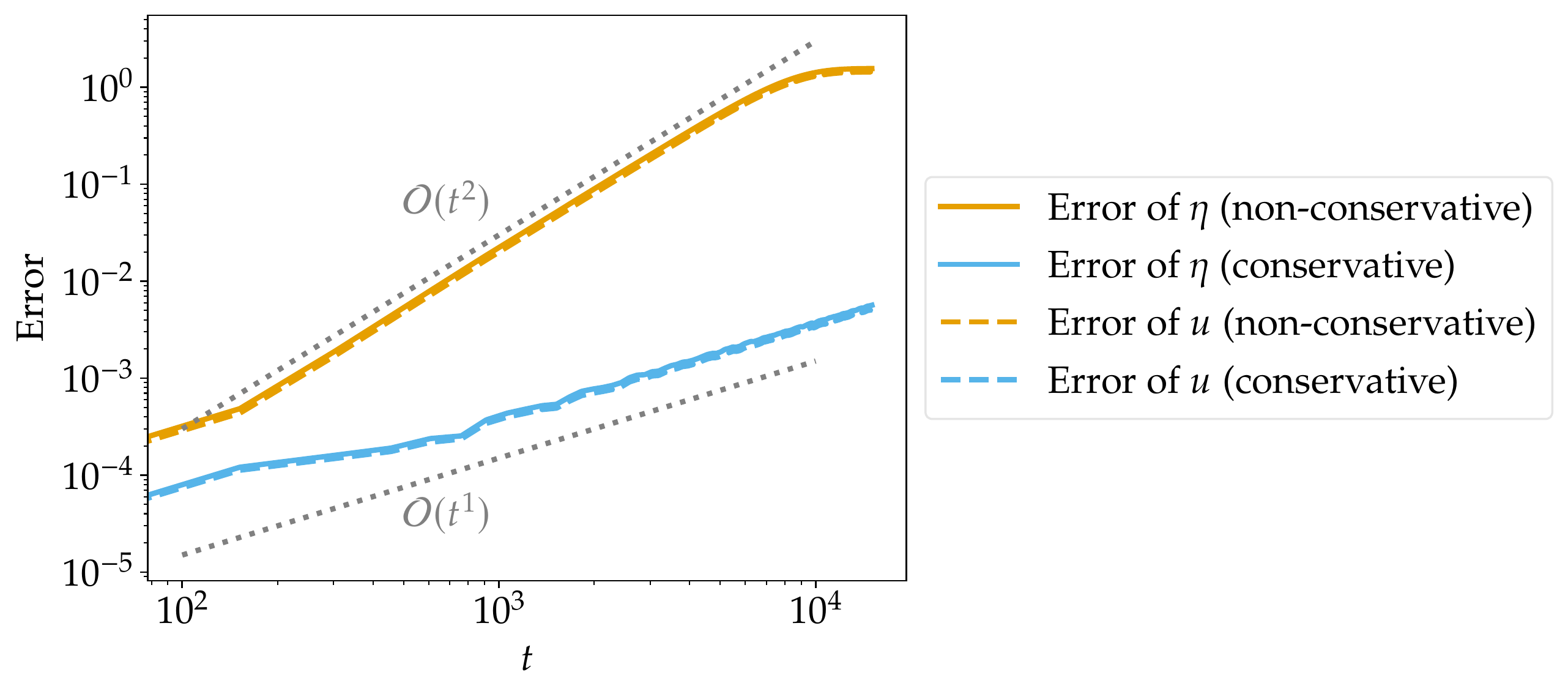}
  \caption{Error growth in time for a numerically generated solitary wave solution
           of the BBM-BBM system \eqref{eq:bbm_bbm-dir}, using \eqref{eq:bbm_bbm-inv-SBP}
           with Fourier collocation in space and 5th-order Runge-Kutta in time.
           The conservative method uses relaxation to enforce conservation of $\H$ (see \eqref{eq:bbm_bbm-invariants}).}
  \label{fig:bbm_bbm-error}
\end{figure}

\begin{figure}[htb]
\centering
  \begin{subfigure}{0.6\textwidth}
    \includegraphics[width=\textwidth]{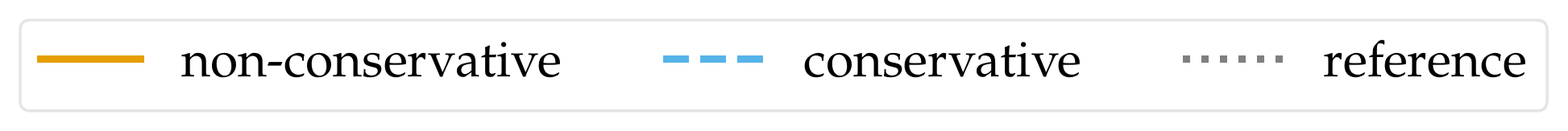}
  \end{subfigure}%
  \\
  \begin{subfigure}{0.49\textwidth}
    \includegraphics[width=\textwidth]{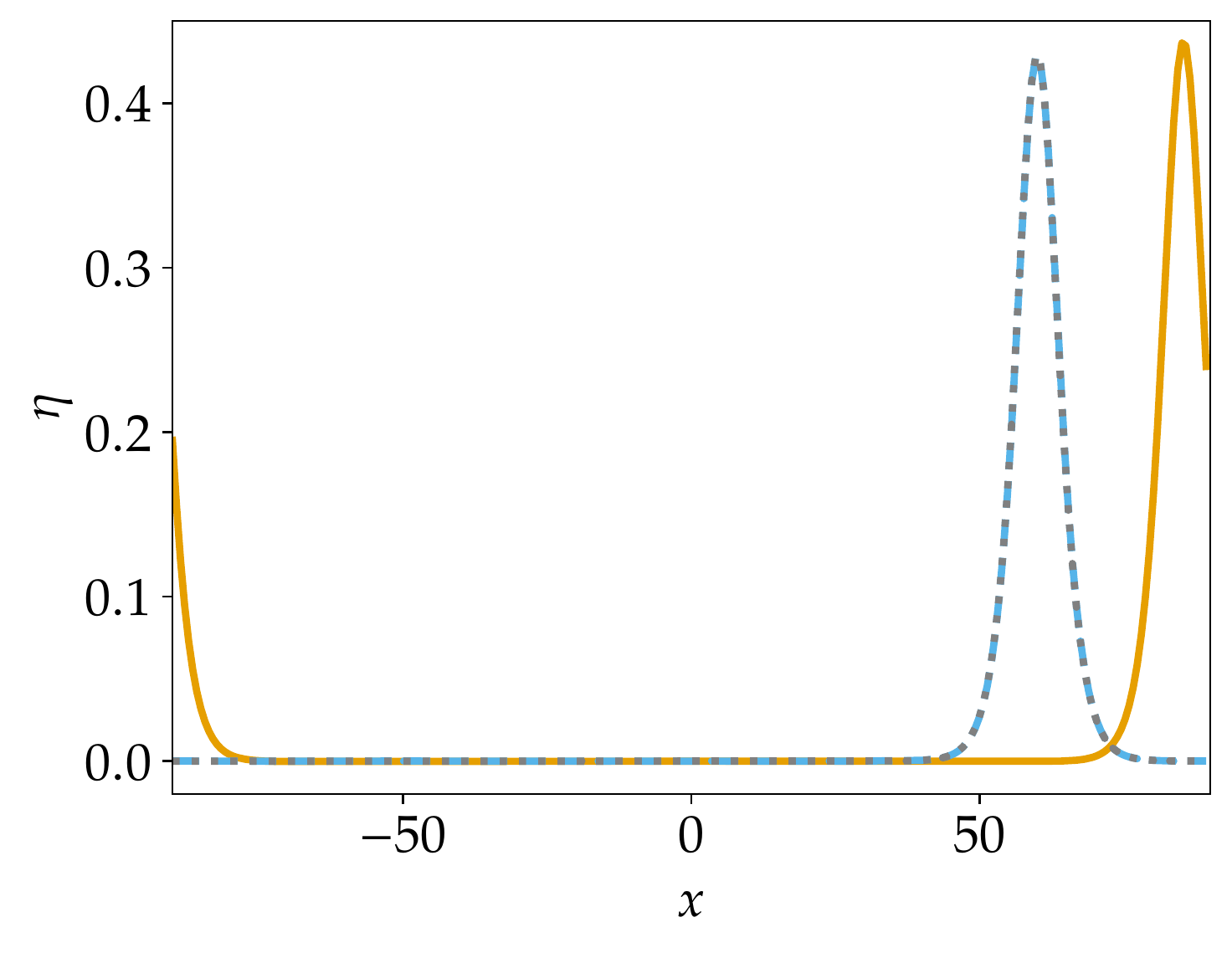}
    \caption{Numerical solution of $\eta$.}
  \end{subfigure}%
  \hspace*{\fill}
  \begin{subfigure}{0.49\textwidth}
    \includegraphics[width=\textwidth]{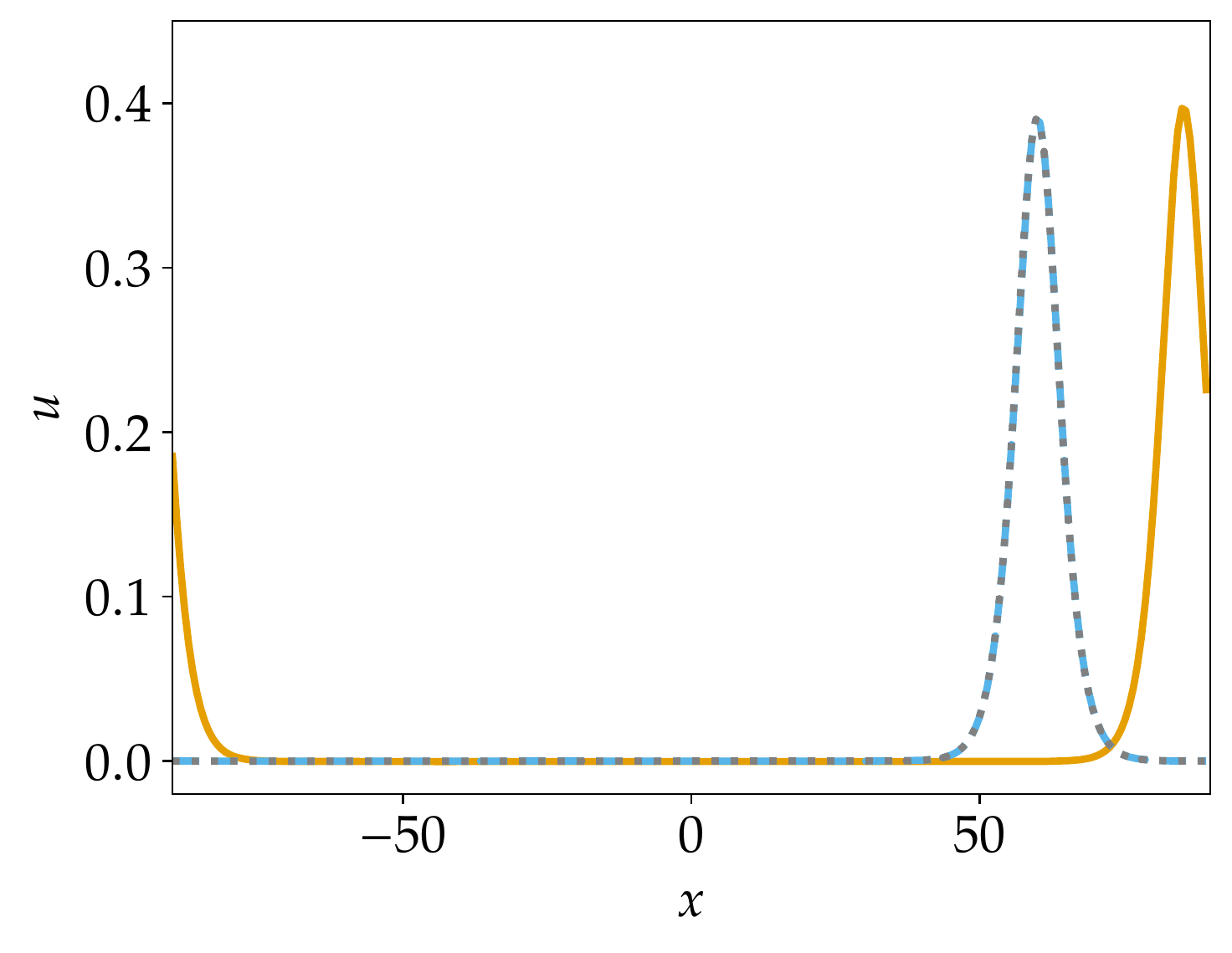}
    \caption{Numerical solution of $u$.}
  \end{subfigure}%
  \caption{Numerical solutions at the final time for a numerically
           generated solitary wave solution of the BBM-BBM system
           \eqref{eq:bbm_bbm-dir} using relaxation methods conserving $\H$
           (see \eqref{eq:bbm_bbm-invariants}).}
  \label{fig:bbm_bbm-solution}
\end{figure}

In each case, we observe approximately linear error growth for the conservative
method and approximately quadratic growth for the non-conservative method, until
eventually the error for the non-conservative method saturates when the exact and
numerical solutions no longer overlap.

An example of the numerical solutions themselves is shown in Figure \ref{fig:bbm_bbm-solution},
using BBM-BBM.
The numerical solutions obtained using the non-conservative method have a
visible amplitude error and a significant phase error. In contrast, the
conservative method yields numerical solutions that are visually indistinguishable
from the reference solution.  Results for the other equations are similar.

For the BBM-BBM system, we also conduct numerical experiments using the discretization
\eqref{eq:bbm_bbm-inv-SBP-quadratic} and applying relaxation to conserve
the corresponding functional.
The error growth in time using this method is visually indistinguishable
from that shown for the method \eqref{eq:bbm_bbm-inv-SBP}. As expected,
the error of the associated conservative method grows linearly in time while
the error of the non-conservative method grows quadratically.

Additionally, we perform numerical experiments with non-smooth traveling waves.
While these are available in closed form for some of the nonlinear dispersive
wave equations studied in this article, their reduced regularity makes it more
difficult to approximate them numerically well enough in space. Note that
existing analytical results about the error growth in time described in
Section~\ref{sec:rel-eq} are based on discretization only in time and assume exactness in space.
Thus, the spatial semidiscretization must be well-resolved in order for these results
to apply to fully discrete schemes.

Here, we use eighth-order accurate finite difference methods with $2^{13}$ nodes
for peakon solutions of the form $u(t, x) = c \exp(-|x - c t|)$ with $c = 1.2$
of the Camassa-Holm equation \cite{camassa1993integrable,lenells2005travelingCH}
in the periodic domain $[-35, 35]$. The fifth-order Runge-Kutta method uses a
local error tolerance of $10^{-7}$.
The results visualized in Figure~\ref{fig:ch-nonsmooth} are in accordance
with the expectations: We observe quadratic error growth for non-conservative
methods and linear error growth for conservative methods.

\begin{figure}[htb]
\centering
  \begin{subfigure}{0.6\textwidth}
    \includegraphics[width=\textwidth]{figures/bbm_bbm_solution_legend}
  \end{subfigure}%
  \\
  \begin{subfigure}{0.49\textwidth}
  \centering
    \includegraphics[width=\textwidth]{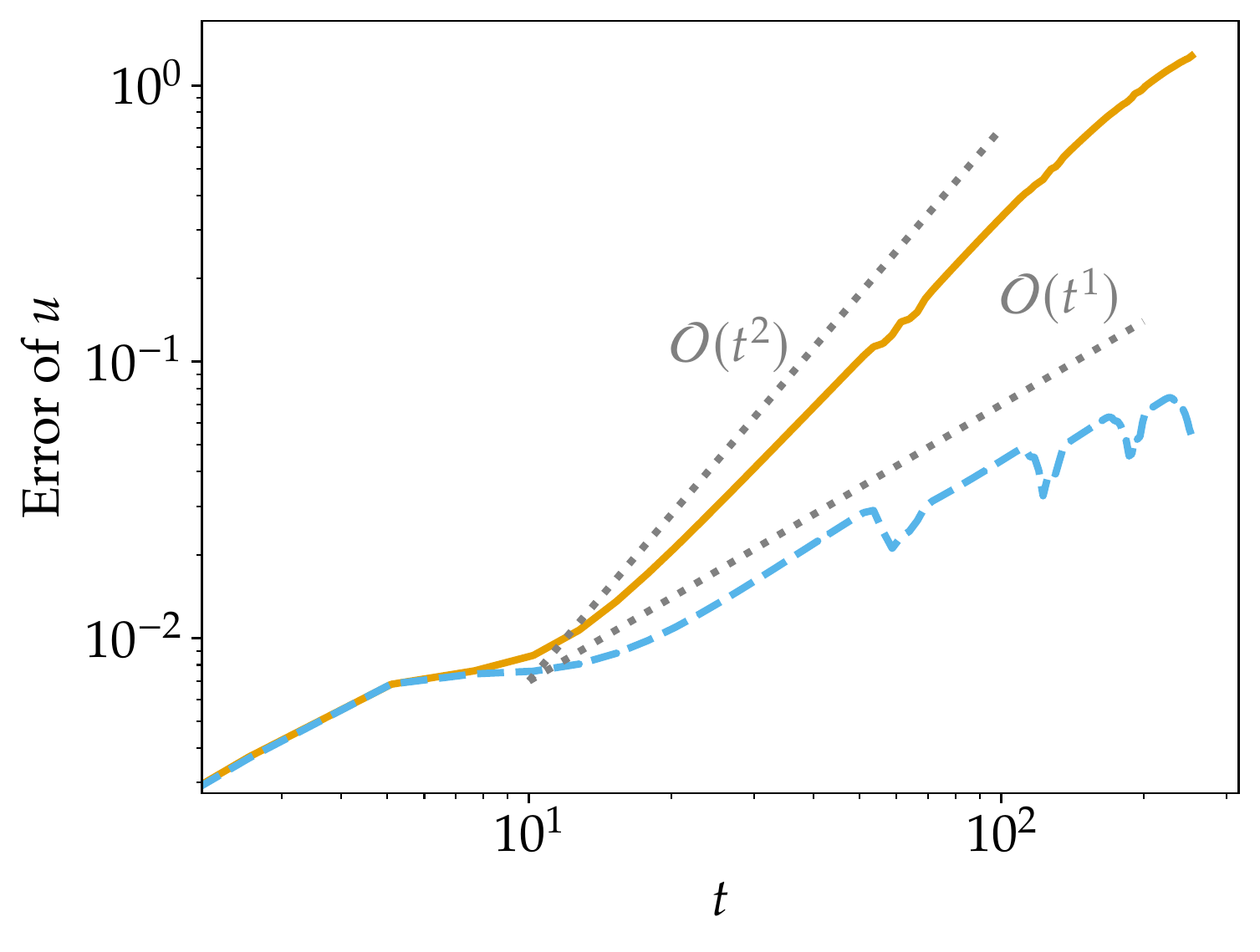}
    \caption{Error growth in time.}
  \end{subfigure}%
  \hfill
  \begin{subfigure}{0.49\textwidth}
  \centering
    \includegraphics[width=\textwidth]{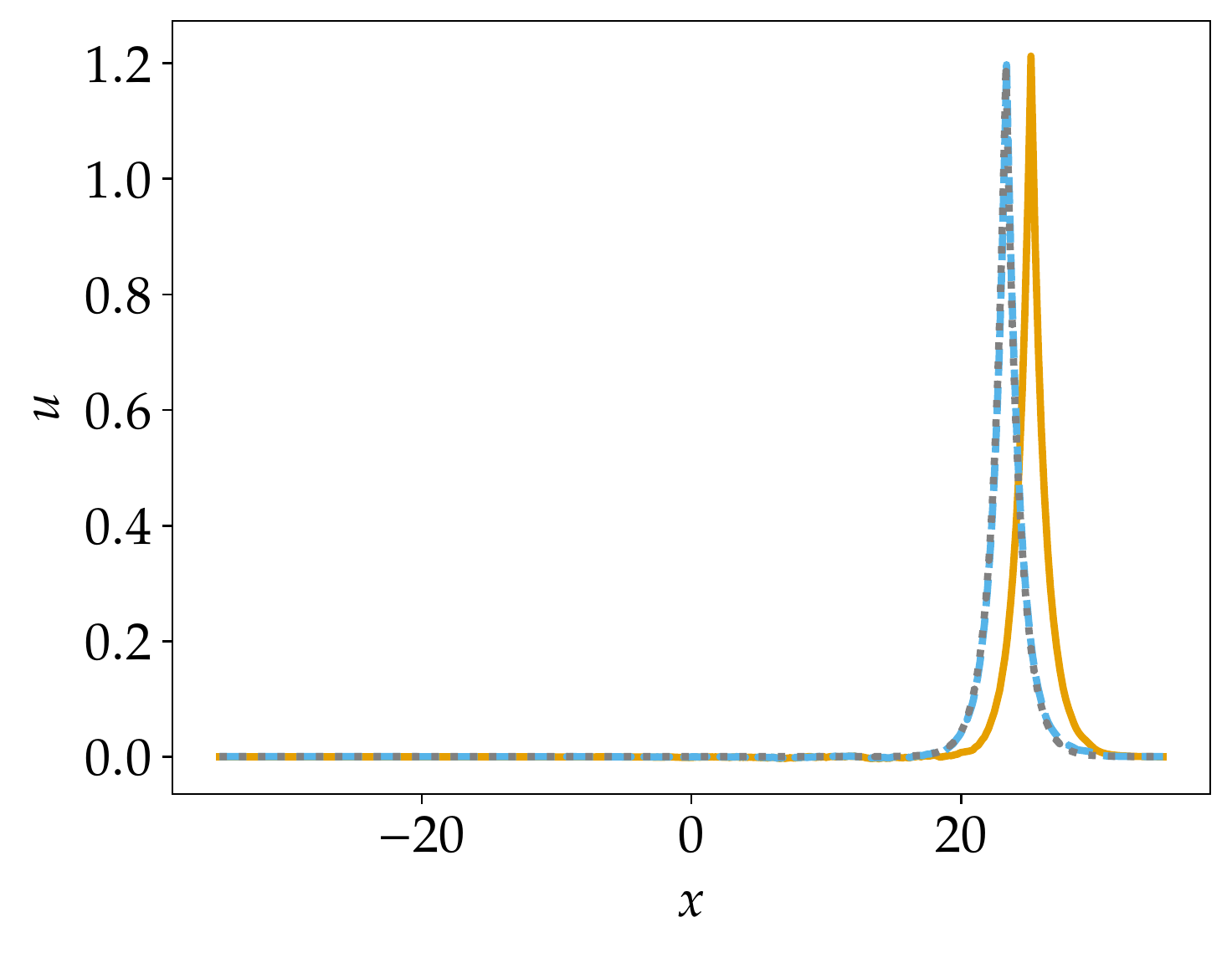}
    \caption{Numerical solutions at the final time.}
  \end{subfigure}%
  \\
  \caption{Numerical results for a non-smooth solitary wave solution of the
           Camassa-Holm equation \eqref{eq:ch-dir} using finite difference methods
           in space and 5th-order Runge-Kutta in time.
           The conservative method uses relaxation to enforce conservation of
           the nonlinear invariant $\inv(u)$.}
  \label{fig:ch-nonsmooth}
\end{figure}

\section{Significance of linear invariants}

In this section we investigate the importance of conservation of linear invariants.
First, we study the behavior of a method that preserves energy but not mass
for the KdV equation.  Then, we consider a linear PDE in which nonlinear
invariants play no role.

\subsection{Projection methods: non-conservation of mass}
\label{sec:proj}

Here we consider the propagation of a soliton solution of the KdV equation, using
three methods.  All three methods use the SBP spatial discretization of
\cite{ranocha2020relaxationHamiltonian} with eighth-order accurate finite
differences that is mass- and energy-conserving.  The time discretizations are:
\begin{itemize}
    \item An IMEX RK method of \cite{kennedy2003additive}
          that conserves mass but not energy
    \item The corresponding relaxation method
          that conserves both mass and energy
    \item The corresponding orthogonal projection method
          that conserves energy but not mass
\end{itemize}
For details on orthogonal projection, we refer to \cite[Section~IV.4]{hairer2013geometric}.
A similar comparison was conducted in \cite{ranocha2020relaxationHamiltonian};
here we conduct a more detailed study of the error growth for the projection scheme.

Results are shown in Figure \ref{fig:kdv}.  The first two methods behave
as expected, in accordance with theoretical results in \cite{frutos1997accuracy}.
The projection method initially performs similar to the relaxation method.
However, its error grows quadratically after some time and eventually leads to a
much more inaccurate solution, in accordance with the theory of
\cite{frutos1997accuracy}. Interestingly, the large global
error in total mass at late times is exhibited not primarily in a change in
mass of the soliton, but in a gradual downward drift of the ambient value of $u$,
as shown in Figure \ref{fig:kdv-zoom}.
We observed a similar behavior of orthogonal projection methods also for other
nonlinear dispersive wave equations such as the BBM equation.

\begin{figure}[htb]
\centering
  \begin{subfigure}{0.8\textwidth}
    \includegraphics[width=\textwidth]{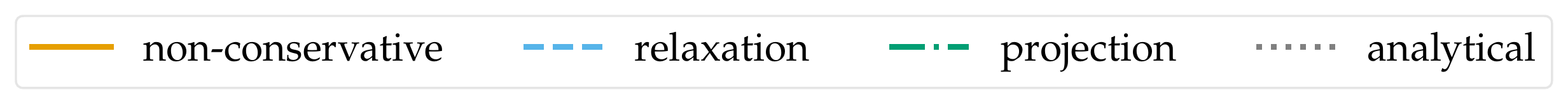}
  \end{subfigure}%
  \\
  \begin{subfigure}{0.49\textwidth}
    \includegraphics[width=\textwidth]{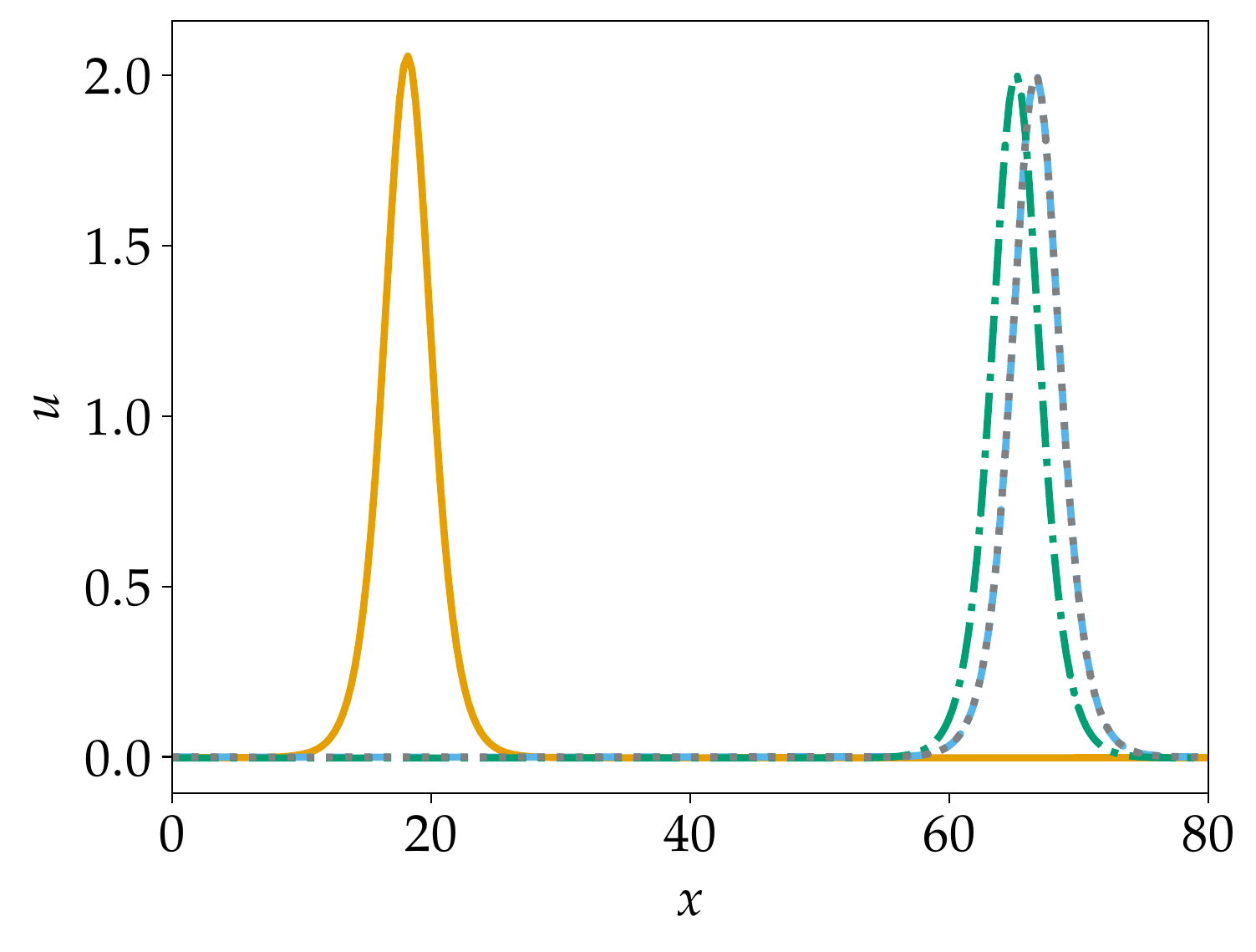}
    \caption{Numerical solution.}
  \end{subfigure}%
  \hspace*{\fill}
  \begin{subfigure}{0.49\textwidth}
    \includegraphics[width=\textwidth]{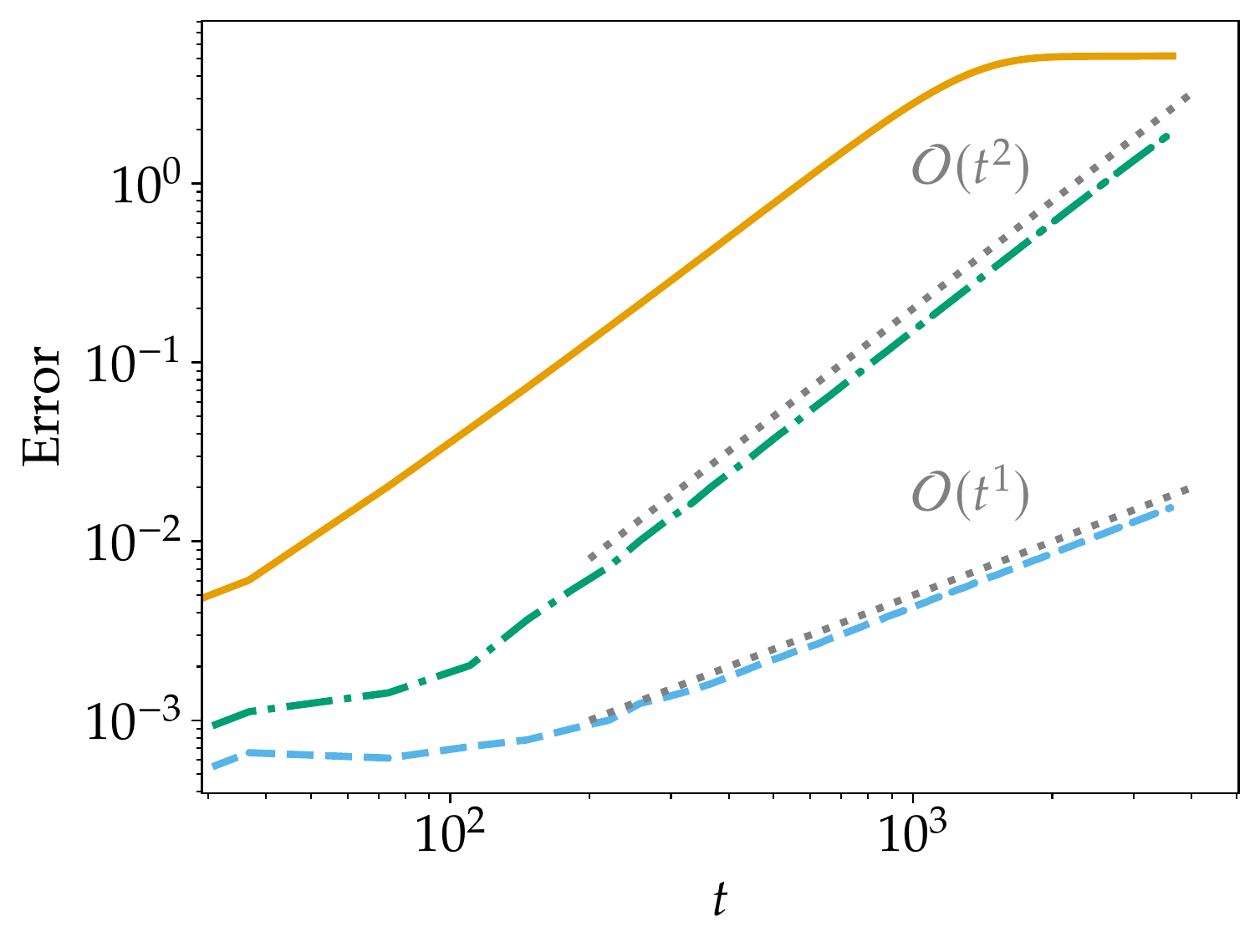}
    \caption{Error versus time.}
  \end{subfigure}%
  \caption{Numerical solutions and error over time for numerical
            propagation of a KdV soliton.}
  \label{fig:kdv}
\end{figure}

\begin{figure}[htb]
\centering
    \includegraphics[width=0.75\textwidth]{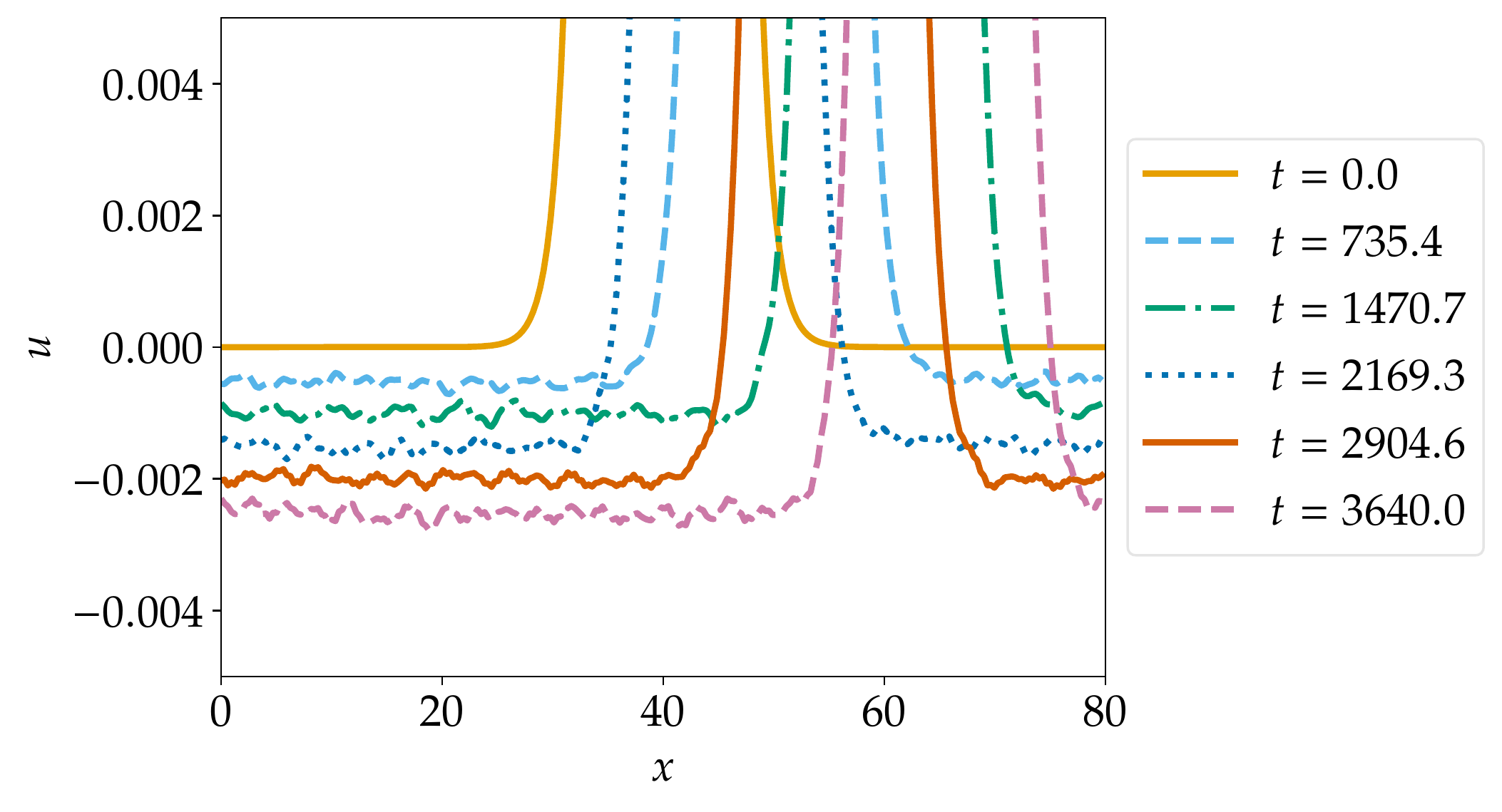}
    \caption{Closeup of numerical solution by projection at various times.}
  \label{fig:kdv-zoom}
\end{figure}

\subsection{A linear dispersive equation}
\label{sec:linear}

As alluded to in the introduction, nonlinearity is an important condition to
observe different behaviors of the error growth in time. To demonstrate that,
we consider the linear dispersive equation
\begin{equation}
\label{eq:linear}
\begin{aligned}
  (\I - \partial_x^2) \partial_t u(t,x) + \partial_x u(t,x) &= 0,
    && t \in (0, T), x \in (\xmin, \xmax),
  \\
  u(0,x) &= u^0(x),
    && x \in [\xmin, \xmax],
\end{aligned}
\end{equation}
with periodic boundary conditions. The functionals
\begin{subequations}
\label{eq:linear-invariants}
\begin{align}
  J_1^{\mathrm{lin}}(u) &= \int_{\xmin}^{\xmax} u,
  \\
  \label{eq:linear-invariants-energy}
  J_2^{\mathrm{lin}}(u) &= \int_{\xmin}^{\xmax} (u^2 + (\partial_x u)^2)
                         = \int_{\xmin}^{\xmax} u (\I - \partial_x^2) u,
\end{align}
\end{subequations}
are invariants of solutions of \eqref{eq:linear}, which are conserved by
semidiscretizations of the form
\begin{equation}
\label{eq:linear-SBP}
  \partial_t \vec{u} + (\I - \D2)^{-1} \D1 \vec{u} = \vec{0}.
\end{equation}
\begin{theorem}
  If $\D1$ is a periodic first-derivative SBP operator
  and $\D2$ is  a periodic second-derivative SBP operator,
  then the semidiscretization \eqref{eq:linear-SBP}
  conserves the invariants \eqref{eq:linear-invariants} of \eqref{eq:linear}.
\end{theorem}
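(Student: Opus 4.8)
The plan is to follow the same template as the proof of Theorem~\ref{thm:bbm_bbm-inv-SBP-quadratic}: differentiate the discrete counterpart of each functional along solutions of \eqref{eq:linear-SBP} and show that the time derivative vanishes, using only the SBP relations \eqref{eq:D1-periodic}, \eqref{eq:D2-periodic} together with the identities $\vec{1}^T \M (\I - \D2)^{-1} = \vec{1}^T \M$ \cite[Lemma~2.28]{ranocha2021broad} and $\vec{1}^T \M \D1 = \vec{0}^T$ \cite[Lemma~2.27]{ranocha2021broad}. As a preliminary step I would note that $\I - \D2$ is invertible: by \eqref{eq:D2-periodic}, $\M(\I - \D2) = \M + \A2$ is the sum of a symmetric positive-definite and a symmetric positive-semidefinite matrix, hence symmetric positive definite and nonsingular, and since $\M$ is nonsingular so is $\I - \D2$. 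This observation also records that $\M(\I - \D2)$ is symmetric, which I will use below.

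For the linear invariant $J_1^{\mathrm{lin}}$, whose discrete equivalent is $\vec{1}^T \M \vec{u}$, I would simply compute
\[
  \partial_t \bigl( \vec{1}^T \M \vec{u} \bigr)
  = - \vec{1}^T \M (\I - \D2)^{-1} \D1 \vec{u}
  = - \vec{1}^T \M \D1 \vec{u}
  = 0,
\]
where the second equality uses \cite[Lemma~2.28]{ranocha2021broad} and the third uses \cite[Lemma~2.27]{ranocha2021broad}. For the quadratic invariant $J_2^{\mathrm{lin}}$, whose discrete equivalent is $\vec{u}^T \M (\I - \D2) \vec{u}$, I would use the symmetry of $\M(\I - \D2)$ noted above to write
\[
  \partial_t \bigl( \vec{u}^T \M (\I - \D2) \vec{u} \bigr)
  = 2\, \vec{u}^T \M (\I - \D2) \partial_t \vec{u}
  = - 2\, \vec{u}^T \M (\I - \D2)(\I - \D2)^{-1} \D1 \vec{u}
  = - 2\, \vec{u}^T \M \D1 \vec{u},
\]
and this vanishes because $\M \D1$ is skew-symmetric by the SBP property \eqref{eq:D1-periodic}, so that $\vec{u}^T \M \D1 \vec{u} = 0$ for every real vector $\vec{u}$.

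No genuinely hard step is expected; the argument is a short computation, the substitution $(\I - \D2)(\I - \D2)^{-1} = \I$ doing all the work in the quadratic case. The only place where a word of care is warranted is the identity $\vec{1}^T \M (\I - \D2)^{-1} = \vec{1}^T \M$, which I would either cite as above or verify directly: consistency of $\D2$ gives $\D2 \vec{1} = \vec{0}$, hence $(\I - \D2) \vec{1} = \vec{1}$ and $(\I - \D2)^{-1} \vec{1} = \vec{1}$; combining this with $(\I - \D2)^T \M = \M (\I - \D2)$ (the symmetry of $\M(\I - \D2)$) yields $\vec{1}^T \M (\I - \D2)^{-1} = \bigl( (\I - \D2)^{-1} \vec{1} \bigr)^T \M = \vec{1}^T \M$. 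In contrast to Theorem~\ref{thm:bbm_bbm-inv-SBP-quadratic}, neither diagonality of $\M$ nor commutativity of $\D1$ and $\D2$ is needed here, since the equation is linear.
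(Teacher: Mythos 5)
Your proposal is correct and follows essentially the same route as the paper's proof: differentiate the discrete invariants $\vec{1}^T \M \vec{u}$ and $\vec{u}^T \M (\I - \D2) \vec{u}$, apply $\vec{1}^T \M (\I - \D2)^{-1} = \vec{1}^T \M$ and $\vec{1}^T \M \D1 = \vec{0}^T$, and conclude with the skew-symmetry of $\M \D1$ from \eqref{eq:D1-periodic}. The added remarks on the invertibility of $\I - \D2$ and the direct verification of the cited lemma are correct but do not change the argument.
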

\begin{proof}
  Using $\vec{1}^T \M (\I - \D2)^{-1} = \vec{1}^T \M$
  \cite[Lemma~2.28]{ranocha2021broad} and $\vec{1}^T \M \D1 = \vec{0}^T$
  \cite[Lemma~2.27]{ranocha2021broad} results in
  \begin{equation}
    \vec{1}^T \M \partial_t \vec{u}
    =
    - \vec{1}^T \M (\I - \D2)^{-1} \D1 \vec{u}
    =
    0.
  \end{equation}
  Using the symmetry of $\I - \D2$ with respect to the mass matrix $\M$
  and applying the SBP property \eqref{eq:D1-periodic} results in
  \begin{equation}
    \partial_t J_2^{\mathrm{lin}}(\vec{u})
    =
    2 \vec{u}^T \M (\I - \D2) \partial_t \vec{u}
    =
    - 2 \vec{u}^T \M \D1 \vec{u}
    =
    - \vec{u}^T \M \D1 \vec{u} + \vec{u}^T \D1^T \M \vec{u}
    =
    0.
    \qedhere
  \end{equation}
\end{proof}

Numerical result for the analytical solution
\begin{equation}
  u(t, x) = \sin(\pi (x - c t)), \quad c = \frac{1}{1 + 4 \pi^2},
\end{equation}
in the domain $[\xmin, \xmax] = [-1, 1]$ are shown in Figure~\ref{fig:linear}.
The spatial semidiscretization using a Fourier pseudospectral method with
$N = 2^6$ nodes is integrated in time with the fifth-order accurate Runge-Kutta
method of \cite{tsitouras2011runge} and a tolerance of $10^{-5}$.
While the conservative method has a significantly reduced amplitude error,
the phase error of the conservative and non-conservative methods are visually
indistinguishable. Consequently, the error growth rate in time for both methods
is linear. This is in accordance with analytical results available for other
linear equations such as hyperbolic systems with constant coefficients
in periodic domains; for these equations, the error can also be bounded in
time if non-periodic domains are used and the boundary conditions are imposed
appropriately
\cite{nordstrom2007error,kopriva2017error,oeffner2018error,offner2019error}.

\begin{figure}[htb]
\centering
  \begin{subfigure}{0.6\textwidth}
    \includegraphics[width=\textwidth]{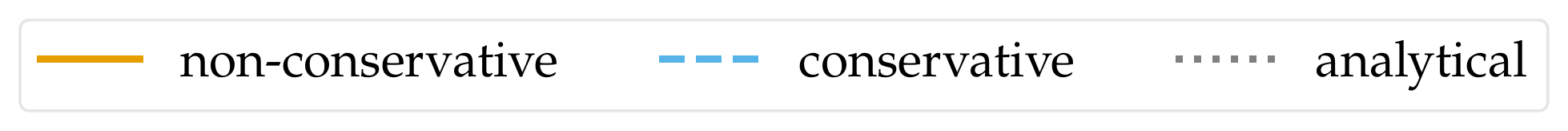}
  \end{subfigure}%
  \\
  \begin{subfigure}{0.49\textwidth}
    \includegraphics[width=\textwidth]{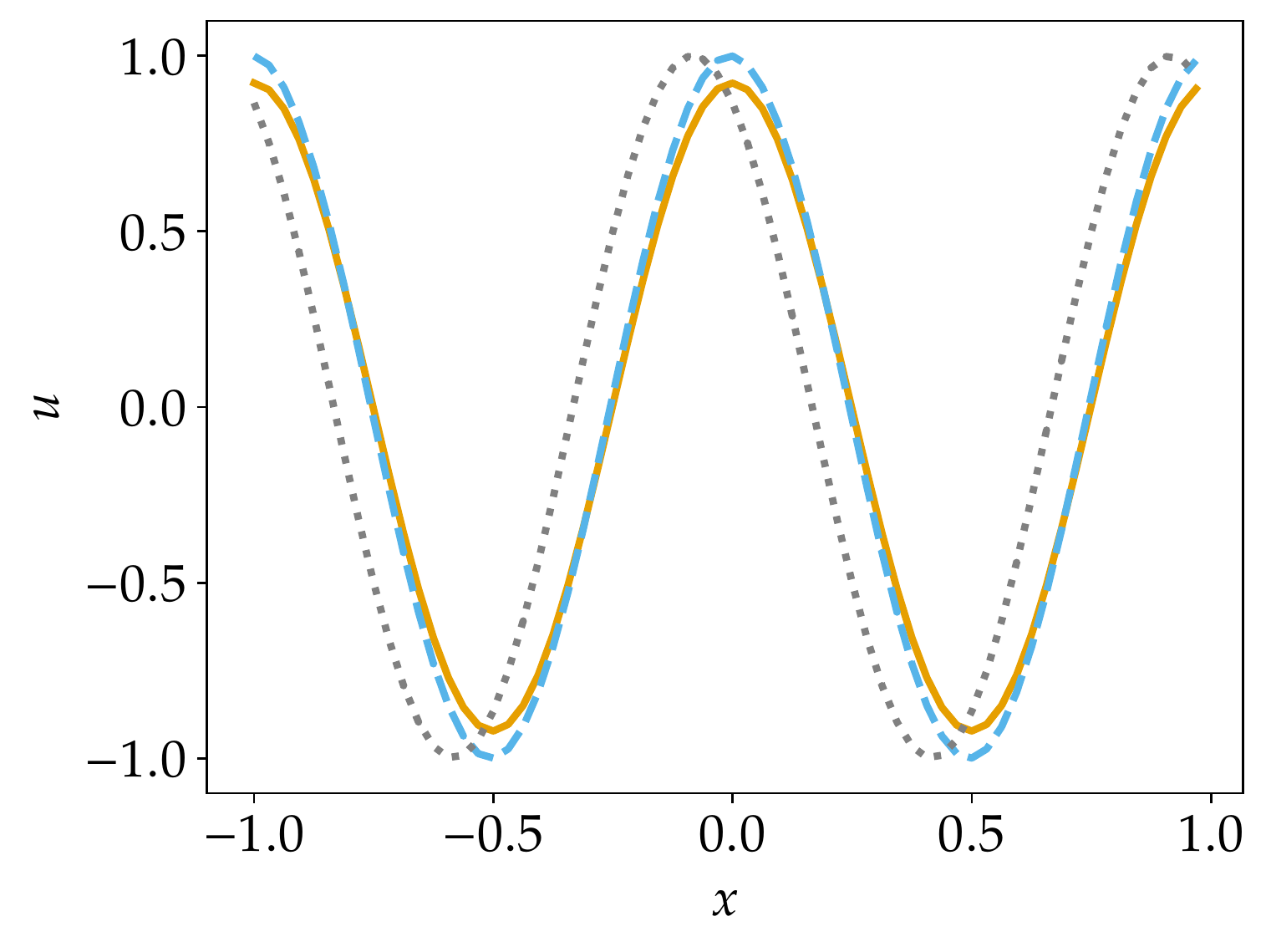}
    \caption{Numerical solution of $u$ at the final time.}
  \end{subfigure}%
  \hspace*{\fill}
  \begin{subfigure}{0.49\textwidth}
    \includegraphics[width=\textwidth]{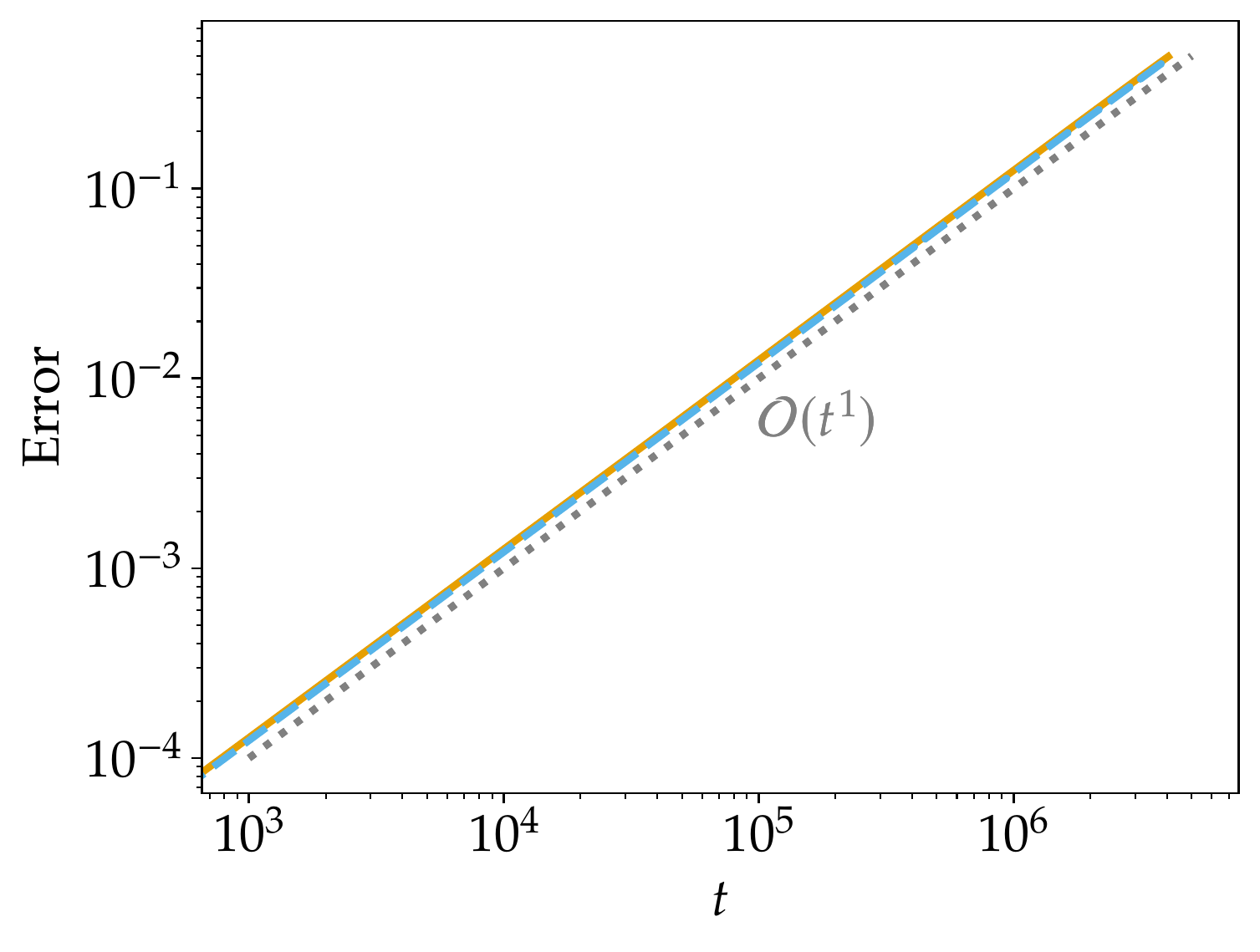}
    \caption{Error growth in time.}
  \end{subfigure}%
  \caption{Numerical results for the linear dispersive equation \eqref{eq:linear}
           using relaxation methods conserving the energy
           \eqref{eq:linear-invariants-energy}.}
  \label{fig:linear}
\end{figure}

\section{The variable-coefficient \texorpdfstring{$p$}{p}-system}
\label{sec:p-system}

The examples in Section~\ref{sec:dispersive-wave-eqs} all involve dispersive nonlinear wave equations with
constant coefficients.  The results there are in line with the results available in
the literature, all of which involve similar dispersive nonlinear wave equations.
We now turn to a very different kind of example that demonstrates the
generality of the behavior in question.  Specifically, we study a system
of nonlinear first-order hyperbolic conservation laws with spatially-varying
coefficients.  This system has no dispersive terms, and in a strict
sense has no traveling wave solutions.  It has been observed
to possess solutions (known as ``stegotons") that are similar to solitary waves,
although they change shape periodically in time; see \cite{leveque2003solitary}.

Specifically, we investigate the behavior of numerical solutions to the first-order
system
\begin{equation}
\label{eq:p-system-periodic}
\begin{aligned}
    \partial_t \epsilon(t,x) - \partial_x u(t,x) & = 0 \\
    \rho(x) \partial_t u(t,x) - \partial_x \sigma(\epsilon(t,x), x) & = 0,
\end{aligned}
\end{equation}
where $u$ is the velocity, $\rho$ the prescribed density, $\epsilon$ the strain,
and
\begin{equation}
    \sigma(\epsilon,x) = \exp(K(x)\epsilon) - 1
\end{equation}
the stress \cite{leveque2003solitary}.
Here we have used notation corresponding to elasticity, but the same
system arises in Lagrangian gas dynamics and is referred to as the $p$-system.
The energy
\begin{equation}  \label{p-system-energy}
  \int E(u(t,x), \epsilon(t,x), x) \dif x,
  \quad
  E(u, \epsilon, x) = \frac{1}{2}\rho(x) u^2 + \int_0^\epsilon \sigma(s,x) \dif s
\end{equation}
is conserved for strong solutions of \eqref{eq:p-system-periodic}.
Generically, solutions of this system give rise to discontinuities (shocks) and
a theory of weak solutions must be invoked.  If $\rho(x)$ and $\sigma(\epsilon,x)$
do not depend explicitly on $x$ then the system has no traveling wave solutions.
However, with appropriate initial
data and periodic variation of the PDE coefficients, solutions appear to be
regular for all time and the energy \eqref{p-system-energy} is conserved
\cite{2012_ketchesonleveque_periodic}.

A straightforward application of an SBP operator $\D1$ in a periodic domain
results in the semidiscretization
\begin{equation}
\label{eq:p-system-periodic-SBP}
\begin{aligned}
  \partial_t \vec{\epsilon}
  &=
  \D1 \vec{u},
  \\
  \partial_t \vec{u}
  &=
  \vec{\rho}^{-1} \D1 \vec{\sigma},
\end{aligned}
\end{equation}
where $\vec{\sigma}$ and the division by $\vec{\rho}$ are evaluated pointwise.
\begin{theorem}
\label{thm:p-system-periodic-SBP}
  Let $\D1$ be a periodic SBP first-derivative operator with diagonal mass matrix $\M$.
  Then the semidiscretization \eqref{eq:p-system-periodic-SBP} conserves
  the total mass of $\epsilon$,
  the total mass of $\rho u$,
  and the total energy $\eta = \int E$.
\end{theorem}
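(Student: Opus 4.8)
The plan is to verify conservation of each of the three functionals by differentiating in time, substituting the semidiscretization \eqref{eq:p-system-periodic-SBP}, and invoking the basic SBP facts already available, namely $\vec{1}^T \M \D1 = \vec{0}^T$ \cite[Lemma~2.27]{ranocha2021broad} and the periodic SBP property \eqref{eq:D1-periodic} in the form $\M \D1 + \D1^T \M = 0$. Throughout, the discrete counterparts of the functionals are: total mass of $\epsilon$ is $\vec{1}^T \M \vec{\epsilon}$; total mass of $\rho u$ is $\vec{1}^T \M \vec{\rho} \vec{u}$ (with $\vec{\rho}$ the diagonal matrix of pointwise densities); and total energy is $\eta = \vec{1}^T \M \vec{E}$ with $E$ evaluated pointwise from \eqref{p-system-energy}. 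Diagonality of $\M$ is what makes these pointwise products interact cleanly with $\M$.

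First I would handle the two mass quantities, which are the easy cases. For the mass of $\epsilon$: $\partial_t (\vec{1}^T \M \vec{\epsilon}) = \vec{1}^T \M \D1 \vec{u} = \vec{0}^T \vec{u} = 0$ by Lemma~2.27. For the mass of $\rho u$: since $\vec{\rho}$ is a fixed diagonal matrix, $\partial_t (\vec{1}^T \M \vec{\rho} \vec{u}) = \vec{1}^T \M \vec{\rho} \, \vec{\rho}^{-1} \D1 \vec{\sigma} = \vec{1}^T \M \D1 \vec{\sigma} = 0$, again by Lemma~2.27 (here I use that $\M$ and $\vec{\rho}$ are both diagonal, so $\vec{1}^T \M \vec{\rho} \vec{\rho}^{-1} = \vec{1}^T \M$).

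The energy is the substantive step. Differentiating pointwise and using $\partial_u E = \rho u$, $\partial_\epsilon E = \sigma(\epsilon,x)$ (the latter by the fundamental theorem of calculus applied to the integral defining $E$), I get $\partial_t \eta = \vec{1}^T \M \bigl( (\vec{\rho} \vec{u}) \odot \partial_t \vec{u} + \vec{\sigma} \odot \partial_t \vec{\epsilon} \bigr)$, where $\odot$ is the pointwise product; since $\M$ is diagonal this equals $(\vec{\rho}\vec{u})^T \M \, \partial_t \vec{u} + \vec{\sigma}^T \M \, \partial_t \vec{\epsilon}$. Substituting \eqref{eq:p-system-periodic-SBP} gives $(\vec{\rho}\vec{u})^T \M \vec{\rho}^{-1} \D1 \vec{\sigma} + \vec{\sigma}^T \M \D1 \vec{u} = \vec{u}^T \M \D1 \vec{\sigma} + \vec{\sigma}^T \M \D1 \vec{u}$, again using diagonality of $\M$ and $\vec{\rho}$ to cancel $\vec{\rho}$ against $\vec{\rho}^{-1}$. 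Finally, $\vec{\sigma}^T \M \D1 \vec{u} = \vec{u}^T \D1^T \M \vec{\sigma} = -\vec{u}^T \M \D1 \vec{\sigma}$ by the SBP property \eqref{eq:D1-periodic}, so the two terms cancel and $\partial_t \eta = 0$.

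The main (very mild) obstacle is bookkeeping around the pointwise/diagonal structure: one must be careful that $\partial_t$ of the pointwise energy $E(u,\epsilon,x)$ produces exactly the factors $\rho u$ and $\sigma$ — in particular that the inner integral $\int_0^\epsilon \sigma(s,x)\,\dif s$ contributes $\sigma(\epsilon,x)\,\partial_t\epsilon$ with no extra term from the $x$-dependence, which holds because $x$ is not being differentiated — and that every appearance of $\vec{\rho}^{-1}\D1$ is legitimately converted back to $\D1$ only after pulling the diagonal $\vec{\rho}$ through the diagonal $\M$. No commutativity assumption on derivative operators is needed here since only $\D1$ appears.
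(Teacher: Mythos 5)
Your proposal is correct and follows essentially the same route as the paper's own proof: both mass conservation statements via $\vec{1}^T \M \D1 = \vec{0}^T$, and energy conservation via the pointwise chain rule, cancellation of $\vec{\rho}$ against $\vec{\rho}^{-1}$ through the diagonal mass matrix, and the skew-symmetry $\M \D1 + \D1^T \M = 0$ to cancel the two remaining terms. Your write-up is in fact slightly more explicit than the paper's at the final step, where the paper leaves the invocation of \eqref{eq:D1-periodic} implicit.
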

\begin{proof}
  For a periodic SBP operator, the rate of change of the total masses is
  \begin{equation}
  \begin{aligned}
    \partial_t \vec{1}^T \M \vec{\epsilon}
    &=
    \vec{1}^T \M \D1 \vec{u}
    =
    \vec{0},
    \\
    \partial_t \vec{1}^T \M \vec{\rho} \vec{u}
    &=
    \vec{1}^T \M \D1 \vec{\sigma}
    =
    \vec{0}.
  \end{aligned}
  \end{equation}
  Furthermore,
  \begin{equation}
    \partial_t \eta
    =
    \partial_t \vec{1}^T \M \vec{\eta}
    =
    \frac{1}{2} \partial_t \vec{1}^T \M \vec{\rho} \vec{u}^2
    + \partial_t \vec{1}^T \M \vec{\Sigma},
  \end{equation}
  where $\Sigma = \int_0^\epsilon \sigma(s, x) \dif s$. Hence,
  \begin{equation}
  \begin{aligned}
    \partial_t \eta
    &=
    \vec{1}^T \M \vec{\rho} \vec{u} \partial_t \vec{u}
    + \vec{1}^T \M \sigma \partial_t \vec{\epsilon}
    =
    \vec{1}^T \M \vec{u} \D1 \vec{\sigma}
    + \vec{1}^T \M \vec{\sigma} \D1 \vec{u}
    \\
    &=
    \vec{u}^T \M \D1 \vec{\sigma}
    + \vec{\sigma}^T \M \D1 \vec{u}
    =
    0,
  \end{aligned}
  \end{equation}
  where we have used that the mass matrix is diagonal.
\end{proof}

We use smooth coefficients given by
\begin{align*}
\rho(x) &= \frac{5}{2} - \frac{3}{2}\sin(2\pi x), \qquad K(x) = \frac{5}{2} - \frac{3}{2}\sin(2\pi x),
\end{align*}
in the domain $[0,20]$. To study the error growth in time,
we construct a stegoton solution numerically using Clawpack
\cite{clawpack561,mandli2016clawpack,ketcheson2013high,ketcheson2012pyclaw} as follows.
We start with a zero initial condition and a left boundary condition given by
\begin{align*}
  \epsilon(0,t) &= 0, \\
  u(0,t) &=
  \begin{cases}
    -0.1\left[1+\cos\left(t_0\pi\right)\right], & \mbox{if } |t_0|\leq 1, \\
       0,                                    & \mbox{otherwise},
  \end{cases}
\end{align*}
where $t_0=\frac{t-2.5}{2.5}$.  This corresponds to a moving wall
that generates a pulse that eventually becomes a train of
stegotons. We solve this problem to a late time (so that the stegotons
are well separated) on a highly-refined grid, and then isolate the
first resulting stegoton.  For more details see \cite{ranocha2021rateRepro}.

Next, we use the numerically constructed single-stegoton solution as initial
condition.  We apply an energy-conservative Fourier pseudospectral semidiscretization
\eqref{eq:p-system-periodic-SBP} with $2^8$ nodes.
We solve the resulting ODE system using the fifth-order Runge-Kutta method of
\cite{tsitouras2011runge} with adaptive time stepping and a tolerance of
$10^{-6}$. The error growth is shown in Figure~\ref{fig:p-system-Fourier-error}
for solutions obtained with relaxation (conservative) and without relaxation (non-conservative).
After an initial transient period, the error of the non-conservative scheme grows
quadratically while the corresponding conservative method results in approximately
linear error growth. The error of the non-conservative method starts to saturate
at $t \approx 10^4$ since the numerical and reference waves do not overlap
anymore, as can be seen in Figure~\ref{fig:p-system-Fourier-solution}. The
numerical solutions obtained using the non-conservative method have a clearly
visible phase error. In addition, the shape of the numerical solutions is deformed,
in particular for $u$. In contrast, the phase error of the conservative method
is negligible. Nevertheless, the shape of the numerical solutions has changed
a bit over these very long-time simulations, and small oscillations have
appeared in the tails of the solitary wave.

\begin{figure}[htb]
\centering
  \includegraphics[width=0.9\textwidth]{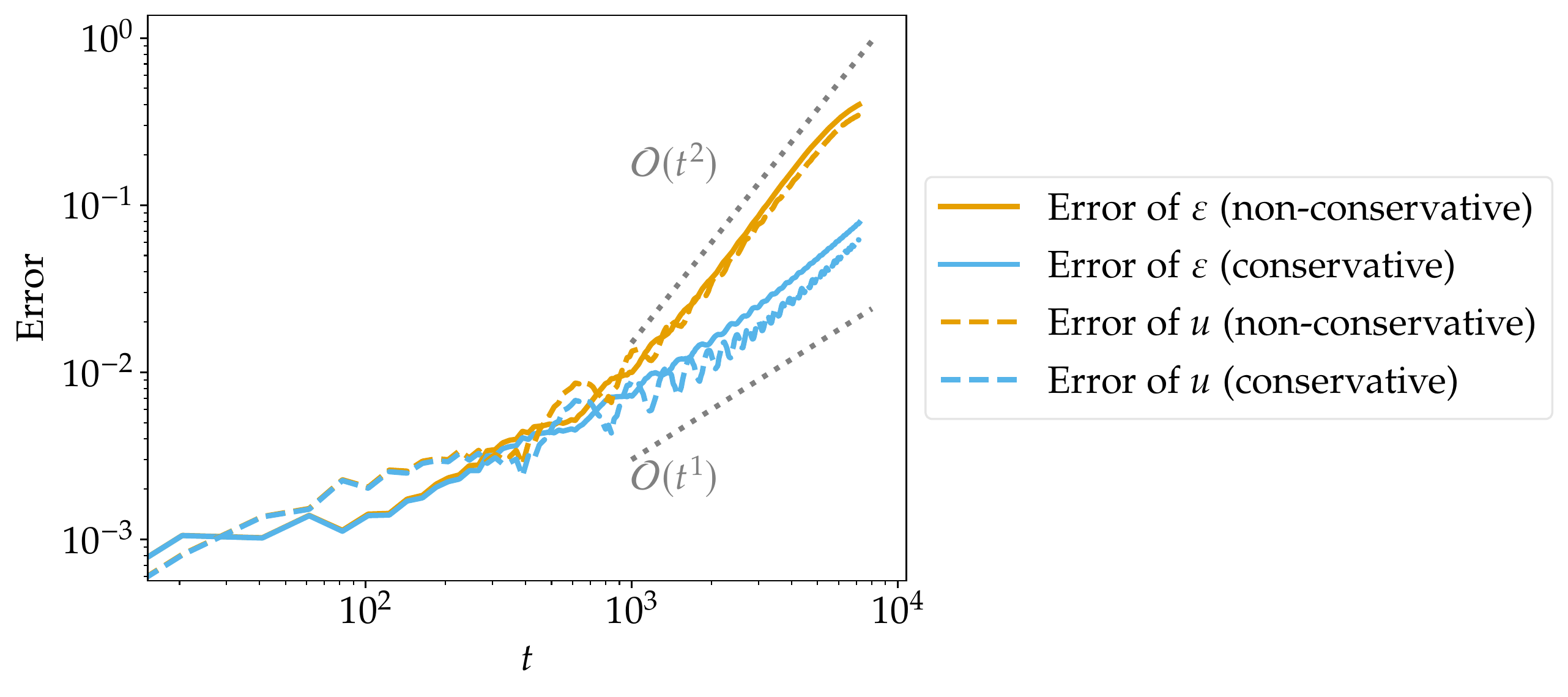}
  \caption{Error growth in time for a numerically generated stegoton solution
           of the variable-coefficient $p$-system \eqref{eq:p-system-periodic},
           using \eqref{eq:p-system-periodic-SBP} with Fourier collocation in
           space and fifth-order Runge-Kutta in time.  The conservative method
           uses relaxation to enforce conservation of \eqref{p-system-energy}.}
  \label{fig:p-system-Fourier-error}
\end{figure}

\begin{figure}[htb]
\centering
  \begin{subfigure}{0.6\textwidth}
    \includegraphics[width=\textwidth]{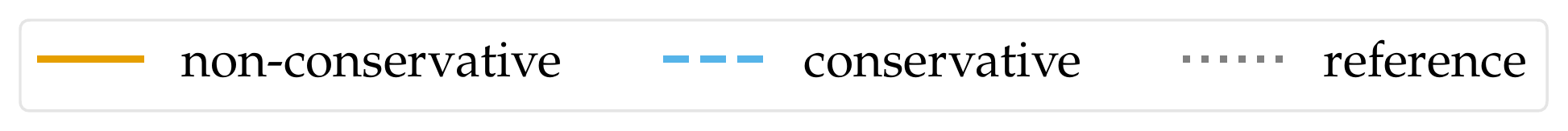}
  \end{subfigure}%
  \\
  \begin{subfigure}{0.49\textwidth}
    \includegraphics[width=\textwidth]{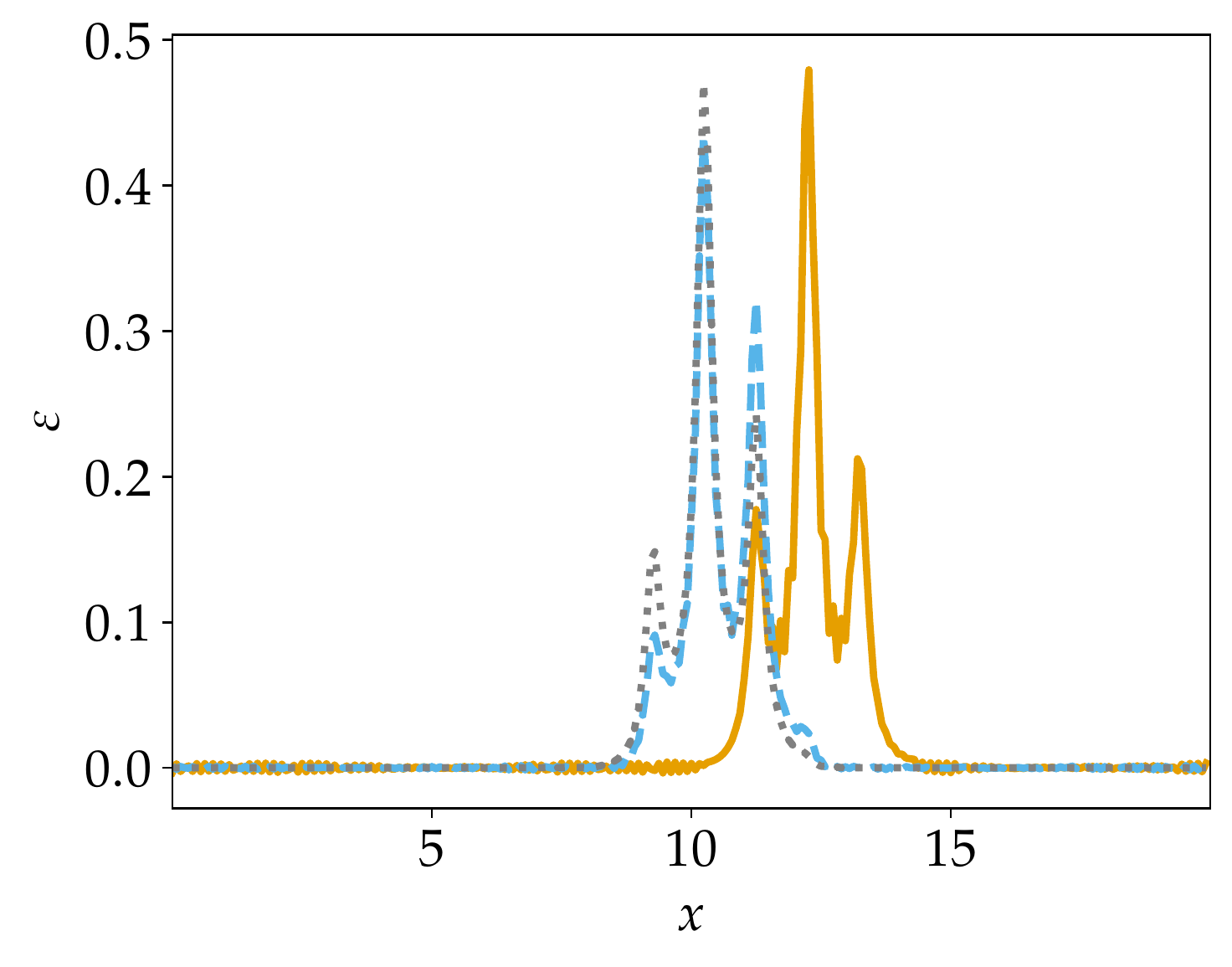}
    \caption{Numerical solution of $\epsilon$.}
  \end{subfigure}%
  \hspace*{\fill}
  \begin{subfigure}{0.49\textwidth}
    \includegraphics[width=\textwidth]{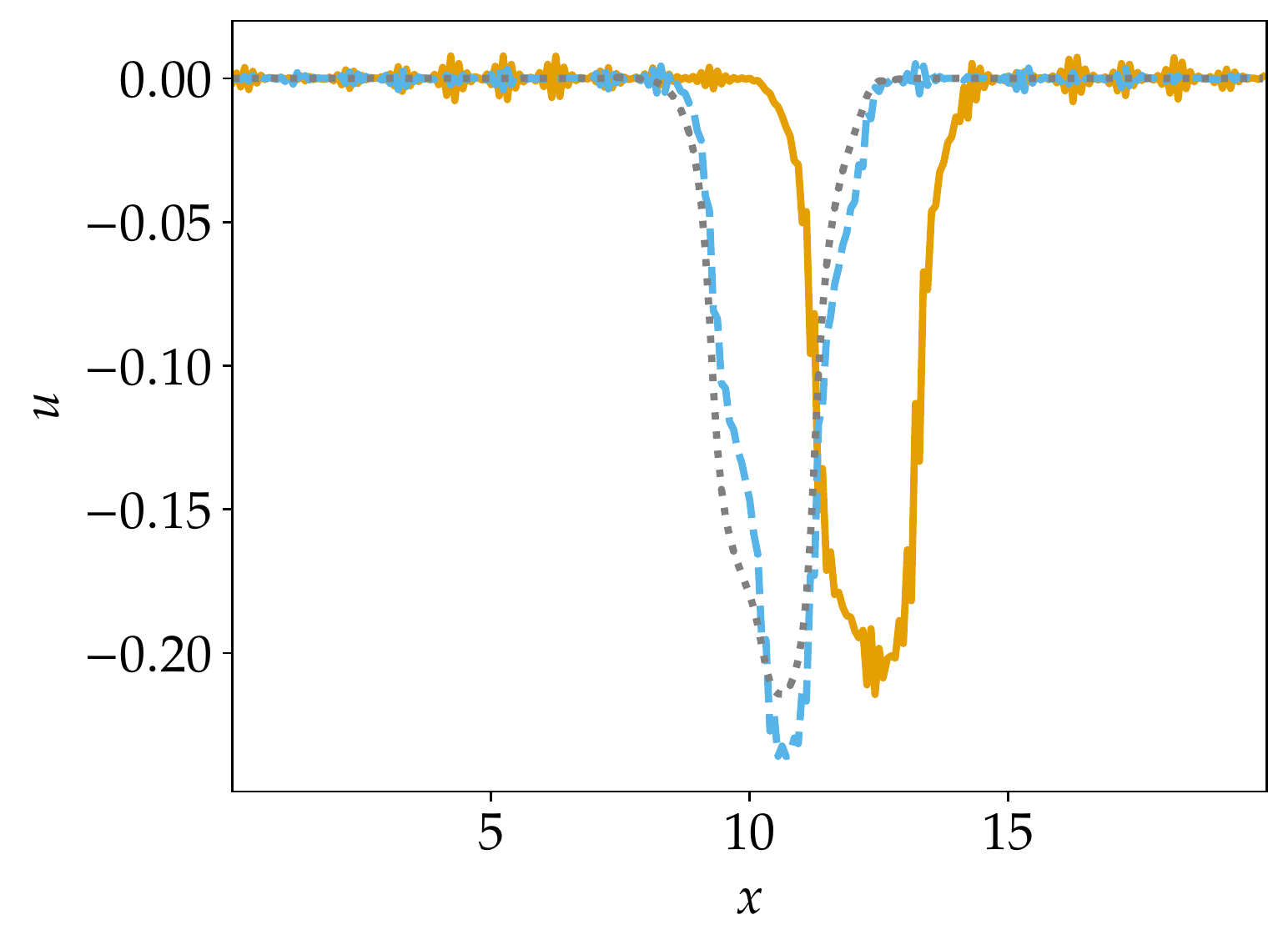}
    \caption{Numerical solution of $u$.}
  \end{subfigure}%
  \caption{Numerical solutions at the final time for a numerically
           generated stegoton solution of the variable-coefficient
           $p$-system \eqref{eq:p-system-periodic}.  The conservative
           solution is visually indistinguishable from the reference
           solution.}
  \label{fig:p-system-Fourier-solution}
\end{figure}

Although existing theory such as that reviewed in Section~\ref{sec:rel-eq} cannot
be applied to this system, it is perhaps not unreasonable to expect the behavior we have
observed.  Although stegotons are not translation invariant, they appear to be solutions
of the form
$$
    u(x,t) = \tilde{u}(x-ct, ct)
$$
where $\tilde{u}$ is periodic in its second argument; i.e.
$$
    \tilde{u}(y, z+\Omega) = \tilde{u}(y,z)
$$
where $\Omega=1$ is the period of the material coefficients $\rho(x), K(x)$.
This ``periodic translation invariance" may fulfill the same role as simple
translation invariance.  Note also that the system considered
here behaves, with a certain extreme choice of coefficients, like the discrete
Toda lattice \cite{leveque2003solitary}; for the latter system, it is known that symplectic numerical
integrators exhibit linear error growth \cite[pp. 413-414]{hairer2013geometric}.

\section{The shallow water equations}
\label{sec:swe}

We now turn to the shallow water equations in two space dimensions.  Like the
last example, this is a first-order hyperbolic system whose solutions generically
develop shock discontinuities.  As with the last example, the constant-coefficient
equations have no traveling wave solutions; however, for appropriate initial data
and varying bathymetry, numerical experiments suggest that solitary wave
solutions exist \cite{quezada2019solitary}.  Nevertheless, this example
differs in important ways from all the preceding ones and from all the results
available in the literature on error growth for dispersive nonlinear waves.  It
is a two-dimensional system, and its solitary wave solutions have a two-dimensional
structure (localized in $x$ and periodic in $y$).  Aside from the theoretical complication this involves, it also
imposes a practical challenge.  Since the exact form of the solitary wave
solutions is not known, we will need to compute an approximate solitary wave initial
condition, as we did in Section~\ref{sec:p-system}.  However, in this case we cannot
exhaustively resolve the solution and we expect that resulting
initial condition is close to, but still different from, a solitary wave.

The shallow water equations in two space dimensions with variable bathymetry $b(x,y)$ are
\begin{equation}
\label{eq:swe-2D}
\begin{aligned}
  \partial_t h
    + \partial_x (h v_x)
    + \partial_y (h v_y)
    &= 0,
  \\
  \partial_t (h v_x)
    + \partial_x (h v_x^2 + \frac{1}{2} g h^2)
    + \partial_y (h v_x v_y)
    &= - g h \partial_x b,
  \\
  \partial_t (h v_y)
    + \partial_x (h v_x v_y)
    + \partial_y (h v_y^2 + \frac{1}{2} g h^2)
    &= - g h \partial_y b,
\end{aligned}
\end{equation}
where $h(x,y,t)$ is the water height and $v_x(x,y,t), v_y(x,y,t)$ are the
$x$- and $y$-components of velocity, respectively.
The conserved energy is
\begin{align}
  E = \frac{1}{2}(h v^2 + g h^2) + g h b.
\end{align}

A two-parameter family of well-balanced, energy-conservative
semidiscretizations of the shallow water equations with variable bathymetry
was presented in \cite{ranocha2017shallow,ranocha2018thesis}. The semidiscretization
\begin{subequations}
\label{eq:swe-2D-periodic-SBP}
\begin{align}
  \partial_t \vec{h}
  &=
  - \D{1,x} \vec{h} \vec{v_x}
  - \D{1,y} \vec{h} \vec{v_y},
  \\
  \partial_t \vec{h} \vec{v_x}
  &=
  - \frac{1}{2} \D{1,x} \vec{h} \vec{v_x}^2
  - \frac{1}{2} \vec{h} \vec{v_x} \D{1,x} \vec{v_x}
  - \frac{1}{2} \vec{v_x} \D{1,x} \vec{h} \vec{v_x}
  \\&\quad
  - \frac{1}{2} \D{1,y} \vec{h} \vec{v_x} \vec{v_y}
  - \frac{1}{2} \vec{h} \vec{v_x} \D{1,y} \vec{v_y}
  - \frac{1}{2} \vec{v_y} \D{1,y} \vec{h} \vec{v_x}
  - g \vec{h} \D{1,x} (\vec{h} + \vec{b}),
  \\
  \partial_t \vec{h} \vec{v_y}
  &=
  - \frac{1}{2} \D{1,x} \vec{h} \vec{v_x} \vec{v_y}
  - \frac{1}{2} \vec{h} \vec{v_y} \D{1,x} \vec{v_x}
  - \frac{1}{2} \vec{v_x} \D{1,x} \vec{h} \vec{v_y}
  \\&\quad
  - \frac{1}{2} \D{1,y} \vec{h} \vec{v_y}^2
  - \frac{1}{2} \vec{h} \vec{v_y} \D{1,y} \vec{v_y}
  - \frac{1}{2} \vec{v_y} \D{1,y} \vec{h} \vec{v_y}
  - g \vec{h} \D{1,y} (\vec{h} + \vec{b}).
\end{align}
\end{subequations}
is a member of this family and was presented earlier in
\cite{gassner2016well,wintermeyer2017entropy}, see also
\cite{fjordholm2011well} for related methods.

In the following, we use the gravitational constant $g = 9.8$ and the smooth
bottom topography
\begin{equation}
  b(x, y) = \frac{1}{4} - \frac{1}{4} \sin(2 \pi y)
\end{equation}
in the domain $[0, 20] \times [-0.5, 0.5]$ with periodic boundary conditions.
Results in \cite{quezada2019solitary} and additional computations we
have conducted indicate that a wide range of initial conditions lead to solitary
waves that propagate along the $x$ coordinate direction, transverse to the variation
in bathymetry.  These solitary waves have a non-trivial structure in both spatial dimensions.

To study the error growth in time, we construct a solitary wave solution as follows.
Based on \cite{quezada2019solitary}, we start with an initial condition given by
\begin{align*}
  h(x,y)+b(x,y)=\eta^*+A\exp\left(-\frac{x^2}{4}\right), \qquad h v_x(x,y)= h v_y(x,y)=0,
\end{align*}
where $\eta^*=0.75$ and $A=5\times 10^{-2}$.
After propagating the initial condition up to a final time of $t=340$,
we obtain a train of solitary waves, from which we isolate the largest wave.
For more details see \cite{ranocha2021rateRepro}.
Next, we use the numerically constructed wave as initial
condition for an energy-conservative Fourier pseudospectral discretization
\eqref{eq:swe-2D-periodic-SBP} with $2^{10}$ nodes in $x$ and
$2^{6}$ nodes in $y$.
We solve the resulting ODE using the fifth-order Runge-Kutta method of
\cite{tsitouras2011runge} with adaptive time stepping and a tolerance of
$10^{-6}$. The error growth is shown in Figure~\ref{fig:swe-Fourier-error}.
In contrast to the examples considered previously, the error growth rates of
both methods are sublinear. In agreement with previous results, the error of
the conservative method still grows significantly slower, resulting in an
error at the final time (after 15 periods) that is approximately an order of magnitude
smaller.

\begin{figure}[htb]
\centering
  \includegraphics[width=0.9\textwidth]{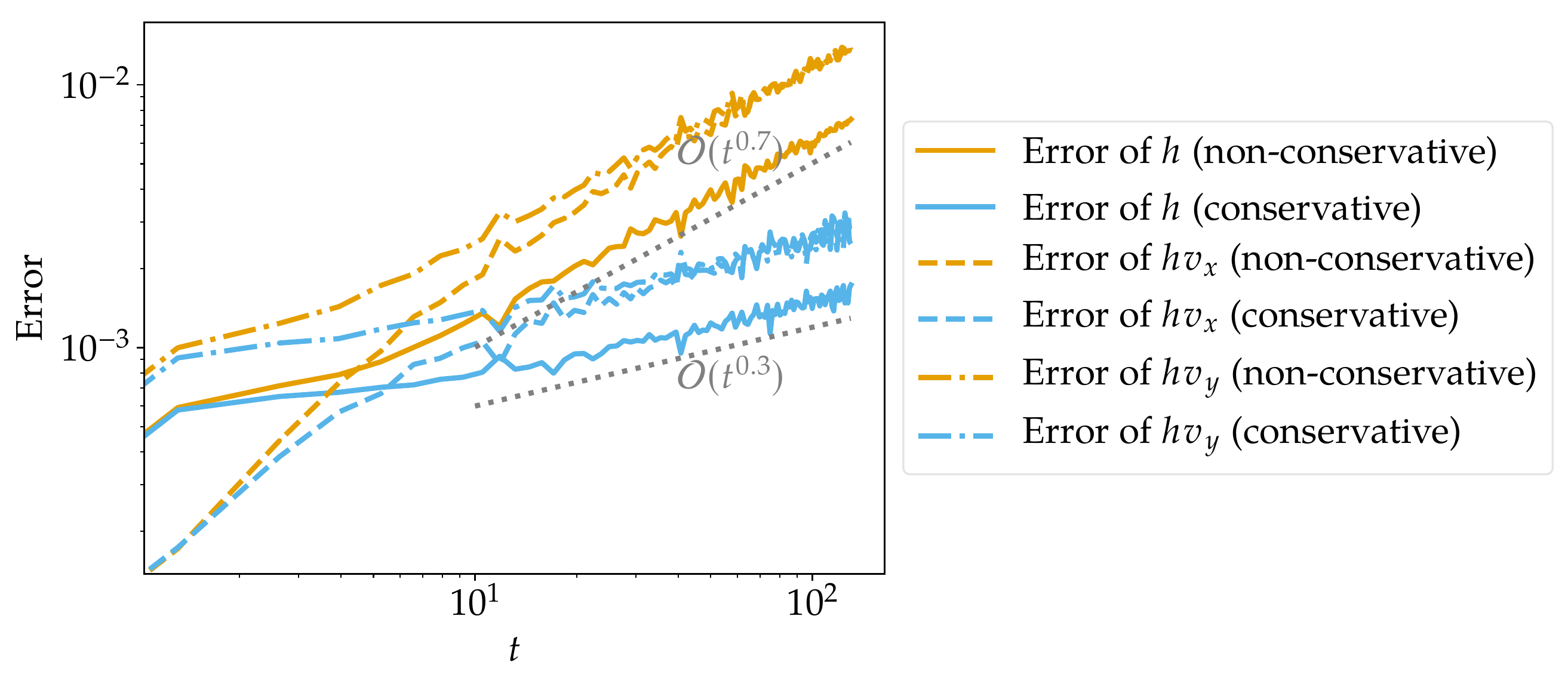}
  \caption{Error growth in time for a numerically generated solitary wave solution
           of the shallow-water equations \eqref{eq:swe-2D}.}
  \label{fig:swe-Fourier-error}
\end{figure}

Snapshots of the numerical solutions at the final time are shown in
Figures~\ref{fig:swe-Fourier-solutions} and
\ref{fig:swe-Fourier-solutions-slices}.
The conservative method advects the profiles of the perturbations well,
resulting in a numerical solution that is visually nearly indistinguishable
from the initial profile. In contrast, the non-conservative method results in
a slightly visible phase error. In addition, signs of nonlinear instabilities
in the form of high-frequency oscillations start to manifest in the y-momentum
and in reduced form also in the x-momentum and the total water height.

\begin{figure}[htb]
\centering
  \begin{subfigure}{\textwidth}
    \centering
    \includegraphics[width=\textwidth]{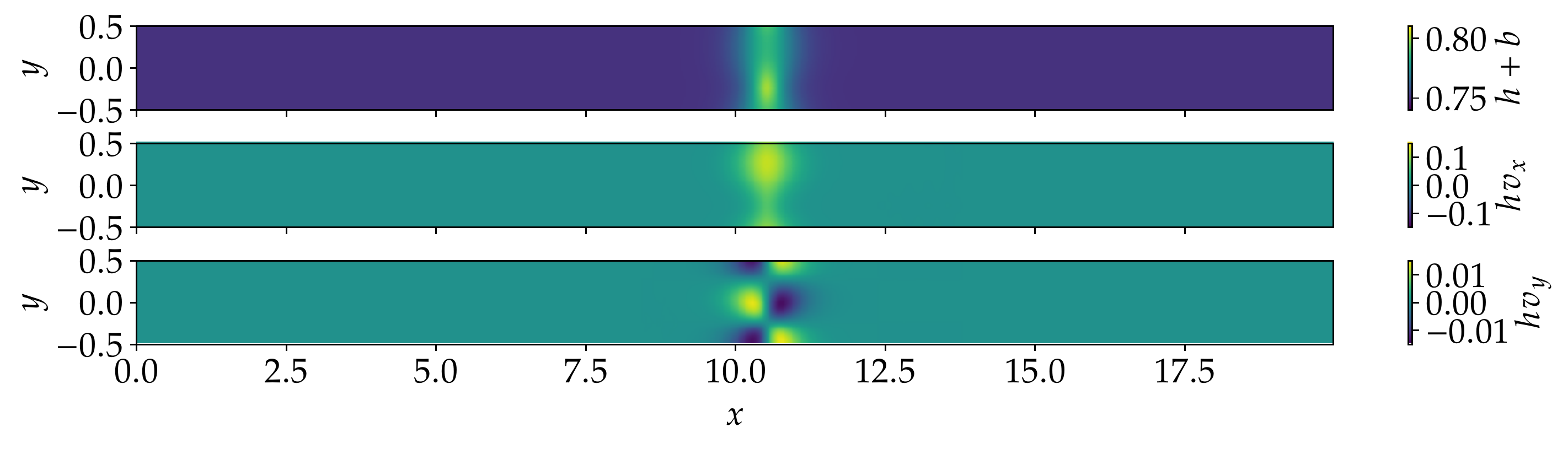}
    \caption{Initial condition.}
  \end{subfigure}%
  \\
  \begin{subfigure}{\textwidth}
    \centering
    \includegraphics[width=\textwidth]{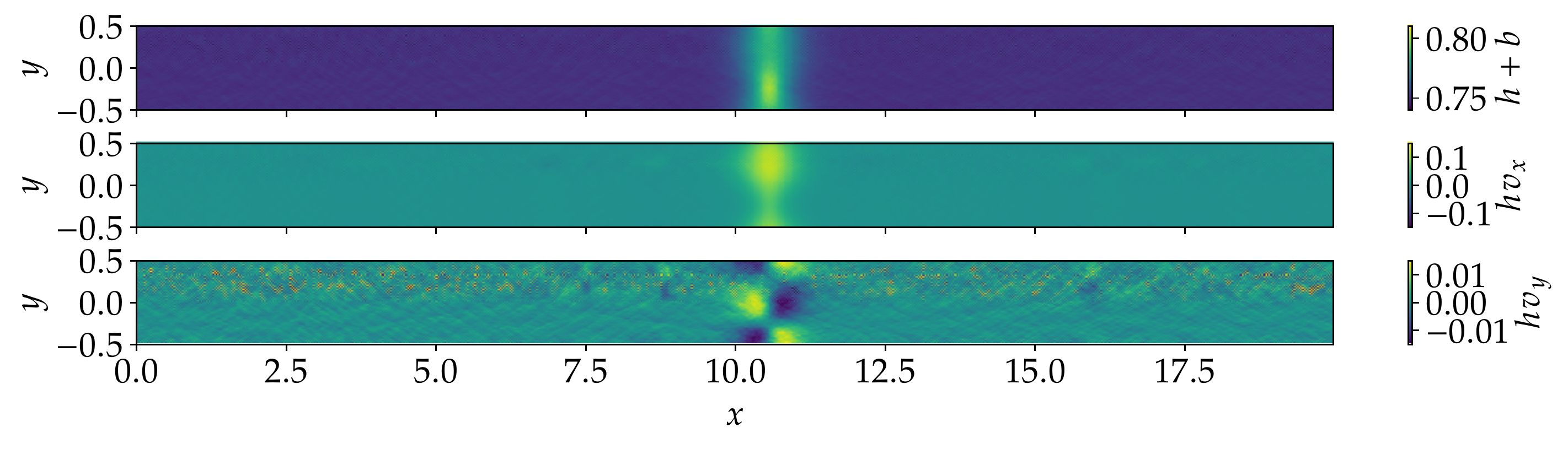}
    \caption{Non-conservative method.}
  \end{subfigure}%
  \\
  \begin{subfigure}{\textwidth}
    \centering
    \includegraphics[width=\textwidth]{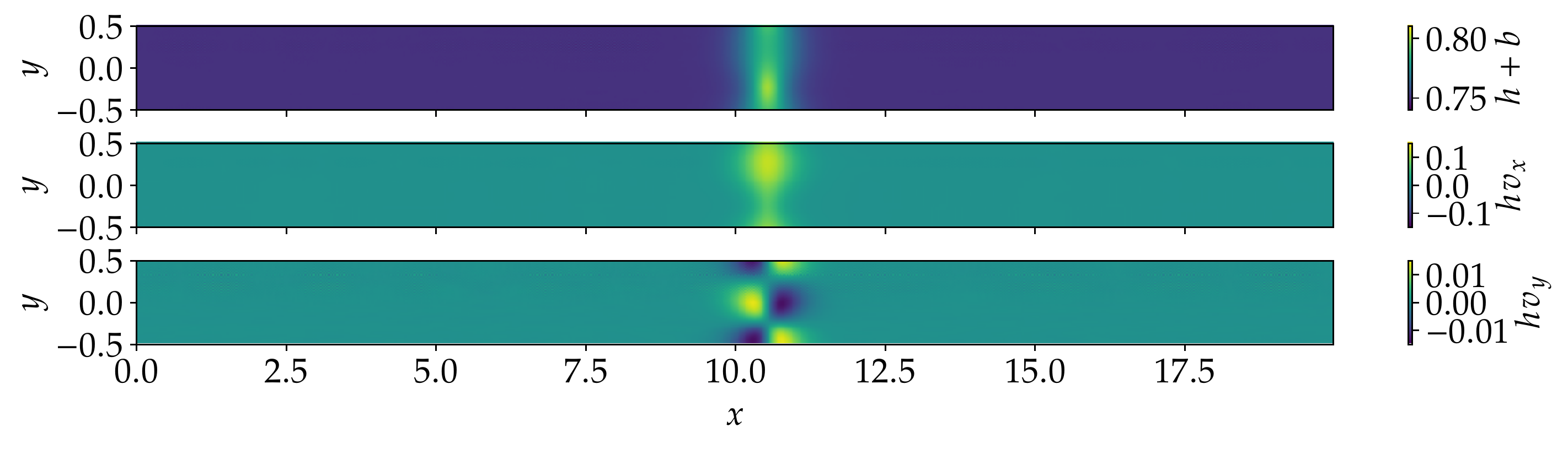}
    \caption{Conservative method.}
  \end{subfigure}%
  \caption{Snapshots of numerical solutions for a numerically generated
           solitary wave solution of the shallow-water equations \eqref{eq:swe-2D}.}
  \label{fig:swe-Fourier-solutions}
\end{figure}

\begin{figure}[htb]
\centering
  \includegraphics[width=\textwidth]{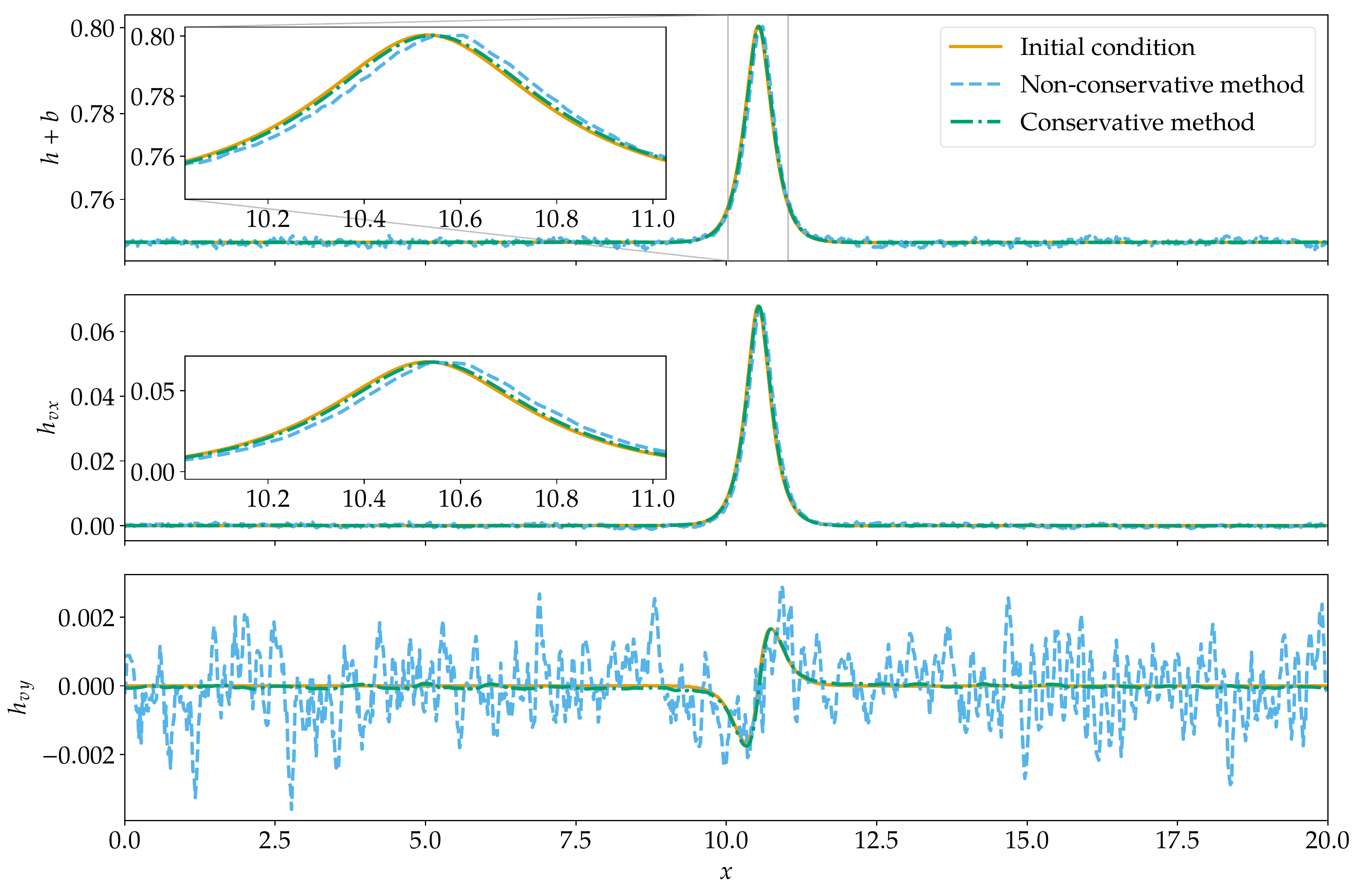}
  \caption{Slices at $y \approx -0.26$ of the numerical solutions
                shown in Figure~\ref{fig:swe-Fourier-solutions}.}
  \label{fig:swe-Fourier-solutions-slices}
\end{figure}

Possible reasons for error growth rates that are not linear/quadratic stem from
differences relative to the other equations studied so far: this system is
two-dimensional, there are no known exact solitary wave solutions, and
the computed initial solitary wave is still affected by small numerical
errors due to the cost of fully resolving the wave in two dimensions.
Additionally, the spatial resolution may not be sufficient, so spatial
discretization errors may be significant in these simulations.

Although these numerical results presented for the shallow water equations do
not match the linear/quadratic error growth rates of the other examples discussed
previously, they still show the usefulness of conservative methods. In the current implementation,
the CPU time of the conservative and non-conservative method are comparable while
the accuracy differs by an order of magnitude. To obtain similarly accurate
solutions using the non-conservative method would require an increased space/time
resolution and hence more computational resources.

\section{Summary and conclusions}
\label{sec:summary}

We have studied the error growth in time of numerical solitary wave solutions of
several systems of PDEs, focusing on the difference in behavior between conservative
and non-conservative methods.  In all cases, the general pattern is the same:
conservative methods exhibit linear error growth, while non-conservative methods
exhibit quadratic error growth.  As one might expect based on this, the
magnitude of the error itself is also much smaller for conservative methods.

We have shown that this phenomenon extends to a wide range of dispersive nonlinear wave
equations, and that this behavior seems to arise when {\em any} nonlinear invariant is
conserved, even if it is not quadratic (see Section~\ref{sec:bbm_bbm}).
Furthermore, we have shown that this behavior extends to other classes of equations,
including the $p$-system example in Section~\ref{sec:p-system}.  This
is remarkable in that the equations in question are non-dispersive and the
solutions are not traveling waves in the usual sense.  Nevertheless, a similar
advantage is obtained by using a conservative discretization.

This suggests that conservative methods may be much more efficient for solving
wave problems that possess one or more conserved functionals.  Efficiency depends
also on the cost of the method, which we have not focused on here.  In most previous
works related to this phenomenon, energy conservation was achieved through the
use of fully-implicit Runge-Kutta time integration.  With the relaxation Runge-Kutta
approach, we can instead employ essentially explicit relaxation Runge-Kutta methods
for non-stiff spatial discretizations, and diagonally implicit or linearly implicit
(Rosenbrock) methods for stiff problems.
These incur only a very small additional cost per time step (in order to solve
a scalar algebraic equation).  For the methods we have compared here, at least,
the conservative approach using relaxation is much more efficient (in terms of
computational cost for a given level of error) than the corresponding
non-conservative method.

We expect that, for the nonlinear dispersive wave equations studied, a
rigorous theoretical explanation of our results could be obtained by
perturbation analysis similar to what has been done for other similar
PDEs.  It is much less clear how to obtain a theoretical justification
for the results regarding variable-coefficient first-order hyperbolic PDEs.
A first step in this direction might be a study focused on
finite-dimensional non-autonomous Hamiltonian systems.

It is natural to ask whether the behavior studied here can be observed for
more general solutions (not consisting of a single traveling wave).
Some theoretical results are available for other classes of solutions; e.g.
for multi-soliton solutions of KdV \cite{alvarez2010multi} and quasi-periodic
solutions of KdV \cite{duran2008time}.  An extension of the current study
to more general solutions is
the subject of ongoing work, and initial tests suggest a complex variety
of behaviors.  Nevertheless, these tests indicate that conservative methods are
consistently much more accurate than their non-conservative counterparts.

\appendix
\section*{Acknowledgments}

We thank Prof. Ángel Durán for his insightful and very detailed comments on an
early draft that helped us to improve this work, and for helping us learn the
theory of relative equilibrium solutions.

Research reported in this publication was supported by the
King Abdullah University of Science and Technology (KAUST).
Funded by the Deutsche Forschungsgemeinschaft (DFG, German Research Foundation)
under Germany's Excellence Strategy EXC 2044-390685587, Mathematics Münster:
Dynamics-Geometry-Structure.

\printbibliography

\end{document}